\newcommand{\comment}[1]{}
\newcommand{\bC}{{\mathbb C}}
\newcommand{\bR}{{\mathbb R}}
\def\a{{\mathfrak a}}
\def\n{{\texttt n}}
\def\H{{\mathcal H}}
\def\X{{\mathcal X}}
\def\bmo{\mathfrak{bmo}}
\def\BMO{\rm{B\! M\! O}}
\def\ExpL{\rm{E\!x\!p\; L}}
\def\LlogL{\rm{L\;l\!o\!g\;L}}
\def\lsim{\raisebox{-1ex}{$~\stackrel{\textstyle <}{\sim}~$}}
\newcounter{rea}
\newcounter{rej}
\newcounter{res}
\newtheorem{thm}{Theorem}[section]
\newtheorem{prop}[thm]{Proposition}
\newtheorem{lem}[thm]{Lemma}
\newtheorem{defn}[thm]{Definition}
\newtheorem{remark}[thm]{Remark}
\begin{document}

\title[]{Products of functions in $\BMO$ and $\H^{1}$ spaces on spaces of homogeneous type}

\author[J. Feuto]{Justin Feuto}
\address{Laboratoire de Math\'ematiques Fondamentales, UFR Math\'ematiques et Informatique, Universit\'e de Cocody, 22 B.P 1194 Abidjan 22. Côte d'Ivoire}
\email{{\tt justfeuto@yahoo.fr}}

\comment{\subjclass{32A37 47B35 47B10 46E22}}
\keywords{space of homogeneous type, Hardy-Orlicz spaces, atomic decomposition, space of test function,  distribution space, maximal function.}

\begin{abstract}
We give an extension to certain \textit{RD-space} $\X$, i.e space of homogeneous type in the sense of Coifman and Weiss, which has the reverse doubling property, of the definition and various properties of the product of functions in $\BMO(\X)$ and $\H^{1}(\X)$, and functions in Lipschitz space $\Lambda_{\frac{1}{p}-1}(\X)$ and $\H^{p}(\X)$ for $p\in\left(\frac{\n}{\n+\theta},1\right]$, where $\n$ and $\theta$ denote respectively the "dimension" and the order of $\X$.

\end{abstract}
\maketitle

\section{Introduction}
It is well known that ${\BMO}(\mathbb R^{n})$ is the dual space of $\H^{1}(\mathbb R^{n})$ and that multiplication by $\varphi\in\mathcal D(\mathbb R^{n})$ is a bounded operator on ${\BMO}(\mathbb R^{n})$. Those facts allow  Bonami, Iwaniec, Jones and  Zinsmeister, to define in \cite{BIJZ} a product $\mathfrak b\times \mathfrak h$ of $\mathfrak b\in {\BMO}(\bR^{n})$ and $\mathfrak h\in\H^{1}(\bR^{n})$ as a distribution, operating on a test function $\varphi\in\mathcal D(\bR^{n})$ by the rule
\begin{equation}
\left\langle \mathfrak b\times\mathfrak h,\varphi\right\rangle:=\left\langle \mathfrak b\varphi,\mathfrak h\right\rangle.\label{product}
\end{equation}
 They  proved that such distributions are sums of a function in $L^{1}(\bR^{n})$ and a distribution in a Hardy-Orlicz space $\H^{\wp}(\bR^{n},\nu)$ where 
\begin{equation}
\wp(t)=\frac{t}{\log(e+t)} \text{ and } d\nu(x)=\frac{dx}{\log(e+\left|x\right|)}.
\end{equation}
The idea of defining the above product is motivated among other things by the fact that for $1<p<\infty$, the product $fg$ of  $f\in L^{p}(\bR^{n})$ and $g$ in the dual space $L^{p'}(\bR^{n})$ of $L^{p}(\bR^{n})$ is integrable (consequently is a distribution). The Hardy space $\H^{1}(\mathbb R^{n})$ being the right substitute of $L^{1}(\bR^{n})$ in many problems, it seems natural to look at its product with its dual space  ${\BMO}(\bR^{n})$. Following of the idea in \cite{BIJZ}, A. Bonami and J. Feuto in \cite{BF} extend results, replacing ${\BMO}(\bR^{n})$ by $\bmo(\bR^{n})$, defined as the space of  locally integrable functions $\mathfrak b$ such that
\begin{equation}\label{bmo}
\sup_{|B|\leq 1}\left( \frac 1 {|B|}\int_B
|\mathfrak b(x)-\mathfrak b_B|dx\right)<\infty\ \ \ \ \mbox{\rm and } \sup_{|B|\geq
1}\left( \frac 1 {|B|}\int_B |\mathfrak b(x)|dx\right)<\infty,
\end{equation}
where $B$ varies among all balls of $\bR^n$, $|B|$ denotes the
measure of the ball $B$ and $\mathfrak b_B$ is the mean of $\mathfrak b$ on $B$. They proved that in this case, the weight $x\mapsto\frac{dx}{\log(e+\left|x\right|)}$ is not necessary.

They also proved that for $\mathfrak h$ in the Hardy space $\H^{p}(\bR^{n})$ $(0<p<1)$ the Hardy-Orlicz space is replaced  by $\H^{p}(\bR^{n})$ provided $\mathfrak b$ belongs to the inhomogeneous Lipschitz space $\Lambda_{n(\frac{1}{p}-1)}(\bR^{n})$. 

The space of homogeneous type introduced by R.R Coifman and G. Weiss in \cite{CW1} being the right space for generalize results stated in the euclidean spaces, we give here the analogous of those results in this context. For this purpose, we consider a space of homogeneous type $(\X,d,\mu)$ (see Section 2 for more explanation about this space) in which all annuli are not empty, i.e. $B(x,R)\setminus B(x,r)\neq\emptyset$ for all $x\in\X$ and $0<r<R<\infty$, where $B(x,r)=\left\{y\in\X:d(x,y)<r\right\}$ is the ball centered at $x$ and with radius $r$. According to \cite{W}, the doubling measure $\mu$ then satisfies the reverse doubling property: there exist two positive constants $\kappa$ and a constant $c_{\mu}$ depending only on $\mu$, such that
\begin{equation}
\frac{\mu(B)}{\mu(\tilde{B})}\geq c_{\mu}\left(\frac{r(B)}{r(\tilde{B})}\right)^{\kappa}\text{ for all balls }\tilde{B}\subset B,\label{reversedoubling}
\end{equation}
where $r(B)$ denotes the radius of the ball $B$. This reverse doubling condition yields that $\mu(\X)=\infty$. Using the doubling condition (\ref{doublingcondition}) and the reverse condition (\ref{reversedoubling}), we have that
\begin{equation} c_{\mu}\lambda^{\kappa}\mu\left(B(x,r)\right)\leq\mu\left(B(x,\lambda r)\right)\leq C_{\mu}\lambda^{\texttt{n}}\mu\left(B(x,r)\right)\label{RD}
\end{equation}
for all $x\in\X$, $r>0$ and $\lambda\geq 1$. We will refer to \texttt{n} as the dimension of the space.
We will also assume that there exists a positive non decreasing function $\varphi$ defined on $\left[0,\infty\right)$ such that for all $x\in\X$ and $r>0$, 
\begin{equation}
\mu\left(B_{(x,r)}\right)\sim \varphi(r).\label{C1}\footnote{Hereafter we propose the following abbreviation $A\sim B$ for the inequalities $C^{-1}A\leq B\leq CB$, where $C$ is a positive constant not depending on not depending on the main parameters.}
\end{equation}
Notice that (\ref{reversedoubling}),(\ref{doublingcondition}) and (\ref{C1}) imply that 
\begin{equation}
r^{\n}\lsim\footnote{$A\lsim B$ mean the ratio $A/B$ is bounded away from zero by a constant independent of the relevant variables in $A$ and $B$}\varphi(r)\lsim r^{\kappa}\text{ if } 0<r<1\label{norminf}
\end{equation}
and
\begin{equation}
r^{\kappa}\lsim\varphi(r)\lsim r^{\n}\text{ if } 1\leq r.\label{normsup}
\end{equation}
These spaces are particular case of the class spaces of homogeneous type named \textit{RD-spaces} in \cite{GLY}. An example of such space is obtained by considering a Lie group $X$ with polynomial growth equipped with a left  Haar measure $\mu$ and the Carnot-Carath\'eodory metric $d$ associated with a H\"ormander system of left invariant vector fields (see \cite{HMY},\cite{Ma} and \cite{Va}).

We use the maximal characterization of Hardy spaces in space of homogeneous type as developed by Grafakos, Lu and Yang in \cite{GLY}. It is proved that this maximal characterization of $\H^{p}(\X,d,\mu)$ agrees with the atomic characterization of Coifman and Weiss in \cite{CW2} if  $p\in\left(\frac{\n}{\n+\theta},1\right]$, where $\theta$ is as in relation (\ref{order}).

We recall that for $p\in\left(0,1\right]$ and $q\in\left[1,\infty\right]\cap\left(p,\infty\right]$, a function $\a\in L^{q}(\X,d,\mu)$ is said to be a $(p,q)$-atom if the following conditions are fulfilled:
\begin{enumerate}
	\item [(a1)] $\a$ is supported in a ball $B$,
	\item[(a2)] $\left\|\a\right\|_{L^{q}(\mathcal X,d,\mu)}\leq\left[\mu(B)\right]^{\frac{1}{q}-\frac{1}{p}}\text { if   }q<\infty$ \\
	and $\left\|\a\right\|_{L^{\infty}(\X,d,\mu)}\leq\mu(B)^{-\frac{1}{p}}\text{ if }q=\infty,$
	\item[(a3)] $\int_{\X}\a(x)d\mu(x)=0.$ 
\end{enumerate}

It is proved in Corollary 4.19 of \cite{GLY} that for $p\in\left(\frac{\n}{\n+\theta},1\right]$ and $q\in\left(p,\infty\right]\cap\left[1,\infty\right]$, $f\in\H^{p}(\X,d, \mu)$ if and only if there is a sequence $(\a_{i})_{i\geq 0}$ of $(p,q)$-atoms, each $\a_{i}$ supported in a ball $B_{i}$, and a sequence $(\lambda_{i})_{i\geq 0}$ of scalars such that
\begin{equation}
\mathfrak h=\sum^{\infty}_{i=1}\lambda_{i}\a_{i}\text{ and } \sum^{\infty}_{i=1}\left|\lambda_{i}\right|^{p}<\infty,\label{atomdecompo}
\end{equation}
where the first series is considered in the sense of distribution as defined in \cite{GLY}, and $\left\|\mathfrak h\right\|_{\H^{p}(\X)}\sim\inf\left\{\left(\sum_{i\geq 0}\left|\lambda_{i}\right|^{p}\right)^{\frac{1}{p}}\right\}$, the infimum being taken over all the decomposition of $f$ as above and $\left\|\mathfrak h\right\|_{\H^{p}(\X)}$ as in (\ref{normhp}). For  $\mathfrak b\in {\BMO}(\X,d,\mu)$ and $\mathfrak h\in\H^{1}(\X,d,\mu)$ as in (\ref{atomdecompo}), the series $\sum^{\infty}_{i=1}\lambda_{i}(\mathfrak b-\mathfrak b_{B_{i}})\a_{i}$ and $\sum^{\infty}_{i=1}\lambda_{i}\mathfrak b_{B_{i}}\a_{i}$ converge in the sense of distribution as we can see in the proof of Theorem \ref{main}. Thus we define the product of $\mathfrak b\times\mathfrak h$ as the sum of both series, i. e. we put
 \begin{equation}
 \mathfrak b\times\mathfrak h:=\sum^{\infty}_{i=1}\lambda_{i}(\mathfrak b-\mathfrak b_{B_{i}})\a_{i}+\sum^{\infty}_{i=1}\lambda_{i}\mathfrak b_{B_{i}}\a_{i}.
 \end{equation}
 Our main result can be stated as follows.

\begin{thm}\label{main}
For $\mathfrak h\in\mathcal H^1(\X,d,\mu)$ and $\mathfrak b\in {\BMO}(\X,d,\mu)$,
the product $\mathfrak b\times \mathfrak h$ can be given a meaning in the sense of
distributions. Moreover, if $x_{0}$ is a fixed element of $\X$ then we have the inclusion
\begin{equation}\label{inclusion}
  \mathfrak b\times \mathfrak h\in L^1(\X,d,\mu)+ \mathcal H^{\wp}(\X,d,\nu),
  \end{equation}
  where 
  \begin{equation}
  d\nu(x)=\frac{d\mu(x)}{\log(e+d(x_{0},x))}.
  \end{equation}
\end{thm}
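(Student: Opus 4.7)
The plan is to adapt the Bonami--Iwaniec--Jones--Zinsmeister strategy of \cite{BIJZ,BF} to the RD-space setting. Fix $q\in(1,\infty]$ and invoke the $(1,q)$-atomic decomposition recalled above (Corollary~4.19 of \cite{GLY}) to write
\begin{equation*}
\mathfrak h=\sum_{i\ge 1}\lambda_i\a_i,\qquad \sum_{i\ge 1}|\lambda_i|\lesssim\|\mathfrak h\|_{\H^1(\X)},
\end{equation*}
with each $\a_i$ supported in a ball $B_i$. Formally split
\begin{equation*}
\mathfrak b\times\mathfrak h=\sum_{i\ge 1}\lambda_i(\mathfrak b-\mathfrak b_{B_i})\a_i+\sum_{i\ge 1}\lambda_i\mathfrak b_{B_i}\a_i=:S_1+S_2.
\end{equation*}
My task is to show that $S_1$ converges in $L^1(\X,d,\mu)$, that $S_2$ converges in $\H^{\wp}(\X,d,\nu)$, and that both series converge in the distribution space of \cite{GLY}; this yields simultaneously the distributional meaning of the product and the inclusion \eqref{inclusion}.

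For $S_1$, the $i$-th summand is supported in $B_i$, and H\"older's inequality together with the John--Nirenberg inequality on spaces of homogeneous type gives
\begin{equation*}
\|(\mathfrak b-\mathfrak b_{B_i})\a_i\|_{L^1(\X,d,\mu)}\le\|\a_i\|_{L^q}\|(\mathfrak b-\mathfrak b_{B_i})\chi_{B_i}\|_{L^{q'}}\lesssim\|\mathfrak b\|_{\BMO(\X)}.
\end{equation*}
Multiplying by $|\lambda_i|$ and summing produces $\|S_1\|_{L^1(\X,d,\mu)}\lesssim\|\mathfrak b\|_{\BMO}\|\mathfrak h\|_{\H^1}<\infty$; this is the easy half.

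The delicate part is the bound on $S_2$. Each $\mathfrak b_{B_i}\a_i$ is a scalar multiple of a $(1,q)$-atom, since $\a_i$ retains mean zero, so the coefficient effectively carried by the $i$-th atom is $\lambda_i\mathfrak b_{B_i}$. After replacing $\mathfrak b$ by $\mathfrak b$ minus its mean over $B(x_0,1)$ (which does not alter $\mathfrak b\times\mathfrak h$), iterating the oscillation estimate over a doubling chain of balls joining $B(x_0,1)$ to $B_i$ and using \eqref{RD} gives
\begin{equation*}
|\mathfrak b_{B_i}|\lesssim\|\mathfrak b\|_{\BMO}\bigl(1+\log(e+d(x_0,x_{B_i}))+|\log\varphi(r(B_i))|\bigr),
\end{equation*}
where $x_{B_i}$ is the center of $B_i$. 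The weight $d\nu$ is tailored so that when the Orlicz function $\wp(t)=t/\log(e+t)$ is applied to each atomic term, the factor $\log(e+d(x_0,\cdot))\sim\log(e+d(x_0,x_{B_i}))$ on the support of $\a_i$ cancels the dominant logarithm in $|\mathfrak b_{B_i}|$, while the residual factor $|\log\varphi(r(B_i))|$ is absorbed via \eqref{norminf}--\eqref{normsup} and the fact that $\wp$ itself dampens the excess. A direct $\wp$-atomic calculation then yields $\|S_2\|_{\H^{\wp}(\X,d,\nu)}\lesssim\|\mathfrak b\|_{\BMO}\|\mathfrak h\|_{\H^1}$.

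The main obstacle is precisely this Hardy--Orlicz estimate: one must develop (or invoke) an atomic characterization of $\H^{\wp}(\X,d,\nu)$ in the RD-space with the non-doubling weight $\nu$, verify that cancellation of $\a_i$ together with the logarithmic control on $\mathfrak b_{B_i}$ places each term $\lambda_i\mathfrak b_{B_i}\a_i$ in a suitable $\wp$-atomic class, and treat atoms of very small or very large scale separately using \eqref{norminf}--\eqref{normsup}. Once this is in hand, the distributional convergence of both series follows by testing partial sums against a function $\psi$ in the test-function class of \cite{GLY}: the H\"older regularity of $\psi$ combined with the vanishing moment of $\a_i$ produces an extra factor that makes the series converge absolutely term by term, so $\mathfrak b\times\mathfrak h$ is a well-defined distribution satisfying \eqref{inclusion}.
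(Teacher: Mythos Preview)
Your treatment of $S_1$ matches the paper's. The gap is in $S_2$: you propose to control $|\mathfrak b_{B_i}|$ by a logarithmic chain estimate and then place each $\lambda_i\mathfrak b_{B_i}\a_i$ in a suitable $\wp$-atomic class for $\H^{\wp}(\X,d,\nu)$. This requires an atomic decomposition of the weighted Hardy--Orlicz space, which you correctly flag as ``the main obstacle'' but do not carry out. Moreover, the scale term $|\log\varphi(r(B_i))|$ in your bound for $|\mathfrak b_{B_i}|$ is not damped by the weight $\nu$ (which only sees $d(x_0,\cdot)$), and the claim that $\wp$ itself absorbs it is not justified; summing over atoms with arbitrarily small or large radii is precisely where a termwise $\wp$-atomic estimate becomes delicate.

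The paper bypasses all of this. Instead of estimating $\mathfrak b_{B_i}$, it works directly with the grand maximal function of the partial sums and uses the pointwise inequality
\[
\Bigl(\sum_{i=k}^{\ell}\lambda_i\mathfrak b_{B_i}\a_i\Bigr)^{\!*}
\le\sum_{i=k}^{\ell}|\lambda_i|\,|\mathfrak b-\mathfrak b_{B_i}|\,\a_i^{*}
+\Bigl(\sum_{i=k}^{\ell}|\lambda_i|\,\a_i^{*}\Bigr)|\mathfrak b|.
\]
The first sum on the right lies in $L^1(\X)$ (hence in $L^{\wp}(\X,\nu)$) by Lemma~\ref{controle}, which controls $\|(\mathfrak b-\mathfrak b_B)\a^{*}\|_{L^1}$ uniformly over atoms via the decay of $\a^{*}$ off its supporting ball. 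For the second sum, the paper proves a weighted H\"older--Orlicz inequality (Lemma~\ref{normp}, built on the global exponential integrability of Lemma~\ref{lemint}):
\[
\|g\cdot\mathfrak b\|_{L^{\wp}(\X,\nu)}\le C\,\|g\|_{L^1(\X)}\,\|\mathfrak b\|_{\BMO^{+}(\X)},
\]
applied with $g=\sum_{i=k}^{\ell}|\lambda_i|\a_i^{*}\in L^1(\X)$. No atomic theory for $\H^{\wp}(\X,\nu)$ is needed; the weight $\nu$ enters only through this single Orlicz duality estimate. Your route may be salvageable, but the paper's is both simpler and complete.
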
 

This result is a generalization of Theorem A of \cite{BIJZ}. In Proposition \ref{bmo-hp}, we prove that the estimate is valid without weight for $\mathfrak b$ in $\bmo(\X,d,\mu)$, while in Theorem \ref{lipschitzhom} we obtain that Hardy-Orlicz class is replaced by the classical weight Hardy space $\H^{p}(\X,d,\tau)$ ($d\tau(x)=w(x)d\mu(x)$ for some appropriate weight) when $\mathfrak h\in\H^{p}(\X,d,\mu)$ and $\mathfrak b\in\Lambda_{\frac{1}{p}-1}(\X,d,\mu)$. This result is new even in the Euclidean case, since in \cite{BF} there was only a remark on the possibility of such estimate.

  Section 2 is devoted to notations and definitions. We recall in this paragraph the definition of spaces of homogeneous type and the grand maximal characterization of Hardy space as introduced in \cite{GLY}. In section 3, we give a prerequisite on Hardy-Orlicz space and prove some lemmas we need for our main result. We prove our main result in the last section, as well as its extensions. 

Throughout the paper, $C$ will denotes constants that are independent of the main parameters involved,  with values which may differ from line to line.

\section{Notations and definitions}

A quasimetric $d$ on a set $\X,$ is a function $d:\X\times
\X\rightarrow \left[ 0,\infty \right) $ which satisfies

\begin{enumerate}
\item[$\left( i\right) $] $d(x,y)=0$ if and only if $x=y$ ;

\item[$\left( ii\right) $] $d(x,y)=d(y,x)$ for all $x,y$ in $\X$;

\item[$\left( iii\right) $] there exists a finite constant $K_{0} \geq 1$
such that 
\begin{equation}
d(x,y)\leq K_{0} \left( d(x,z)+d(z,y)\right)  \label{0.001}
\end{equation}

for all $x,y,z$ in $\X.$
\end{enumerate}
The set $\X$ equipped with a quasimetric $d$ is called quasimetric space.
%
%

Let $\mu$ be a positive Borel measure on $(\X,d)$ such that all balls defined by $d$ have finite and positive measure. We say that the triple $(\X,d,\mu)$ is a space of homogeneous type if there exists a constant $C\geq 1$ such that for all $x\in\X$ and $r>0$, we have 
\begin{equation}
\mu(B(x,2r))\leq C\mu(B(x,r)).\label{doublingproperty}
\end{equation}
This property is known as the doubling property. If $C_{0}$ is the smallest constant for which (\ref{doublingproperty}) holds, then by iterating (\ref{doublingproperty}), we have
\begin{equation}
\frac{\mu(B)}{\mu(\tilde{B})}\leq C_{\mu}\left(\frac{r(B)}{r(\tilde{B})}\right)^{\texttt{n}}\text{ for all balls } \tilde{B}\subset B\label{doublingcondition}
\end{equation}
where $\n=\log_{2}(C_{0})$ and  $C_{\mu}=C_{0}(2K_{0})^{\n}$.
%

Notice that from the reverse doubling property, $\mu(\left\{x\right\})=0$  for all $x\in\X$. We also have that
	\begin{equation}
	\mu\left(B(x,r+d(x,y)\right)\sim \mu\left(B(y,r)\right)+\mu\left(B(y,d(y,x)\right)
	\end{equation}
	for $x,y\in\X$ and $r>0$.

In this paper, $\X=(\X,d,\mu)$ is a space of homogeneous type in which relations (\ref{reversedoubling}) and (\ref{C1}) are satisfy. We also assume (see \cite{MS1}) that there exist two constants $A'_{0}>0$ and $0<\theta\leq1$ such that 
\begin{equation}
\left|d(x,z)-d(y,z)\right|\leq A'_{0}d(x,y)^{\theta}\left[d(x,z)+d(y,z)\right]^{1-\theta}\label{order}.
\end{equation}
The space is saying to be of order $\theta$. We will refer to the constants  $K_{0},C_{0},\n,,\kappa, C_{\mu}$,$c_{\mu},A'_{0}$ and $\theta$ mentioned above, as the constants of the space. We will not mention the measure and the quasimetric when talking about the space $(\X,d,\mu)$. But if we use another measure than $\mu$, this will be mentioned explicitly. The following abbreviation for the measure of balls will be also used 
\begin{equation}
V_{r}(x)=\mu\left(B(x,r)\right) \text{ and } V(x,y)=\mu\left(B(x,d(x,y))\right),
\end{equation}
for all $x,y\in \X$ and $r>0$.

\begin{defn}\cite{GLY}
Let $x_{0}\in \X$, $r>0$, $0<\beta\leq 1$ and $\gamma>0$. A complex values function $\varphi$ on $\X$ is called a test function of type $(x_{0},r,\beta,\gamma)$ if the following hold:
\begin{enumerate}
	\item [(i)] $	\left|\varphi(x)\right|\leq C\frac{1}{\mu\left(B(x,r+d(x,x_{0})\right)}\left(\frac{r}{r+d(x_{0},x)}\right)^{\gamma}$ for all $x\in \X,$
	\item [(ii)] $\left|\varphi(x)-\varphi(y)\right|\leq C\left(\frac{d(x,y)}{r+d(x_{0},x)}\right)^{\beta}\frac{1}{\mu\left(B(x,r+d(x,x_{0})\right)}\left(\frac{r}{r+d(x_{0},x)}\right)^{\gamma}$	for all $x,y$ in $\X$ satisfying $d(x,y)\leq\frac{r+d(x_{0},x)}{2K_{0}}.$
\end{enumerate}
We denote by $\mathcal G (x_{0},r,\beta,\gamma)$ the set of all test functions of type $(x_{0},r,\beta,\gamma)$, equipped with the norm

\begin{equation}
\left\|\varphi\right\|_{\mathcal G (x_{0},r,\beta,\gamma)}=\inf\left\{C: (i) \text{ and } (ii) \text{ hold} \right\}.
\end{equation}
\end{defn}

\medskip

In the sequel, we will fix an element $x_{0}$ in $\X$ and put $\mathcal G (\beta,\gamma)=\mathcal G(x_{0},1,\beta,\gamma)$.
It is easy to prove that 
\begin{equation}
\mathcal G(x_{1},r,\beta,\gamma)=\mathcal G (\beta,\gamma),
\end{equation}
with equivalent norms for all $x_{1}\in\X$ and $r>0$. Furthermore, it is easy to check that $\mathcal G(\beta,\gamma)$ is a Banach space. 

\medskip

For a given $\epsilon\in\left(0,\theta\right]$ and $\beta,\gamma\in\left(0,\epsilon\right]$, $\mathcal G ^{\epsilon}_{0}(\beta,\gamma)$ denotes the completion of $\mathcal G (\epsilon,\epsilon)$ in $\mathcal G (\beta,\gamma)$. Equipp $\mathcal G ^{\epsilon}_{0}(\beta,\gamma)$ with the norm $\left\|\varphi\right\|_{\mathcal G^{\epsilon}_{0} (\beta,\gamma)}=\left\|\varphi\right\|_{\mathcal G (\beta,\gamma)}$, and denote $\left(\mathcal G^{\epsilon}_{0} (\beta,\gamma)\right)'$ its dual space; that is the set of linear functionals $f$ from  $\mathcal G^{\epsilon}_{0} (\beta,\gamma)$ to $\bC$ with the property that there exists a constant $C>0$ such that for all $\varphi\in\mathcal G^{\epsilon}_{0} (\beta,\gamma)$, $\left|\left\langle f,\varphi\right\rangle\right|\leq C\left\|\varphi\right\|_{\mathcal G (\beta,\gamma)}$. This dual space will be refer to as a distribution space.

For $f\in\left(\mathcal G^{\epsilon}_{0} (\beta,\gamma)\right)'$, the grand maximal function $f^{\ast}$ of $f$ in the sense of Grafakos, Liu and Yang \cite{GLY} is defined for $x\in\mathcal X$ by 
\begin{equation}
f^{\ast}(x)=\sup\left\{\left|\left\langle f,\varphi\right\rangle\right|:\varphi\in\mathcal G^{\epsilon}_{0} (\beta,\gamma),\left\|\varphi\right\|_{\mathcal G(x,r,\beta,\gamma)}\leq 1\text{ for some }r>0\right\}.
\end{equation}

 The corresponding Hardy space $\H^{p}(\X)$ is defined for $p\in\left(0,\infty\right]$ to be the set of $\mathfrak h\in\left(\mathcal G^{\epsilon}_{0}(\beta,\gamma)\right)'$ for which
\begin{equation}
\left\|\mathfrak h\right\|_{\H^{p}(\X)}:=\left\|\mathfrak h^{\ast}\right\|_{L^{p}(\X)}<\infty.\label{normhp}
\end{equation}

It is proved in Proposition 3.15 and Theorem 4.17 of \cite{GLY} that for $\epsilon\in\left(0,\theta\right]$ and $p\in\left(\frac{\n}{\n+\epsilon},1\right]$, the definition of $\H^{p}(\X)$ as stated above is independent of the choice of the underlying space of distribution, i. e. if $f\in(\mathcal G^{\epsilon}_{0}(\beta_{1},\gamma_{1}))'$ with 
\begin{equation}
\n(1/p-1)<\beta_{1},\gamma_{1}<\epsilon\label{dist}
\end{equation}
 and $\left\|\mathfrak h\right\|_{\H^{p}(\X)}<\infty$ then $f\in(\mathcal G^{\epsilon}_{0}(\beta_{2},\gamma_{2}))'$ for every $\beta_{2}$ and $\gamma_{2}$ satisfying (\ref{dist}). 
 
 In the rest of the paper $0<\epsilon\leq \theta$ is fixed and $p\in\left(\frac{\n}{\n+\epsilon},1\right]$. We also fix the underline space of distribution $\mathcal G^{\epsilon}_{0}(\beta,\gamma))'$ with $\beta$ and $\gamma$ as in (\ref{dist}).
 
 \medskip
 
As mentioned in the introduction, the dual space of $\H^{1}(\X)$ is $\BMO(\X)$(space of bounded mean oscillation function), defined as the set of locally integrable functions $\mathfrak b$ satisfying

\begin{equation}
\frac{1}{\mu(B)}\int_{B}\left|\mathfrak b(x)-\mathfrak b_{B}\right|d\mu(x)\leq A \label{bmo},\text{ for all ball } B,
\end{equation}
where $\mathfrak b_{B}=\frac{1}{\mu(B)}\int_{B}\mathfrak b(x)d\mu(x)$, and $A$ a constant depending only on $\mathfrak b$ and the space constant. We put 
\begin{equation}
\left\| \mathfrak b\right\|_{\BMO(\X)}=\sup_{B:ball}\frac{1}{\mu(B)}\int_{B}\left|\mathfrak b(x)-\mathfrak b_{B}\right|d\mu(x)
\end{equation}
and
\begin{equation}
\left\|\mathfrak b\right\|_{\BMO^{+}}=\left\|\mathfrak b\right\|_{\BMO(\X)}+\left|f_{\mathbb B}\right|,
\end{equation}

where $\mathbb B$ is the ball center at $x_{0}$ and with radius $1$. When the measure of $\X$ is finite, $\left(\BMO(\X), \left\| \cdot\right\|_{\BMO}\right)$ is a Banach space. The set of equivalence classes of functions under the relation ''$\mathfrak b_{1}$ and $\mathfrak b_{2}$ in $\BMO(\X)$ are equivalent if and only if $\mathfrak b_{1}-\mathfrak b_{2}$ is constant'' which we still denote by $\BMO(\X)$ equipped with $\left\|\cdot\right\|_{\BMO(\X)}$ is a Banach space .

As proved in \cite{CW2}, we have that for every $1\leq q<\infty$
	\begin{equation}\left\|\mathfrak b\right\|_{\BMO(\X)}\lsim\sup_{B:ball}\left(\frac{1}{\mu(B)}\int_{B}\left|\mathfrak b-\mathfrak b_{B}\right|^{q}d\mu\right)^{\frac{1}{q}} \lsim\left\|\mathfrak b\right\|_{\BMO(\X)},\label{equibmo}
	\end{equation}
	for all $\mathfrak b$ in $\BMO(\X)$, where the supremum is taken over all balls of $\X$.
	
	We also have by the doubling condition of the measure $\mu$, that for $\mathfrak b\in \BMO(\X)$, and $B$ a ball in $(\X,d)$, 
	
\begin{equation}
 \left|\mathfrak b_{B}-\mathfrak b_{2^{k}B}\right|\leq C (1+k)\left\|f\right\|_{\BMO(\X)} \text{ for all non negative integer }k,
\end{equation}

Theorem B of \cite{CW2} (see also Theorem 5.3 of \cite{HMY2}) stated that for $\frac{\n}{\n+\epsilon}<p<1$, the dual space of Hardy space $\H^{p}(\X)$ is the Lipschitz space $\Lambda_{\frac{1}{p}-1}(\X)$. We recall that for $0<\gamma$, the Lipschitz space $\Lambda_{\gamma}(\X)$ is the set of those functions $f$ on $\X$ for which 
\begin{equation}
\left| f(x)-f(y)\right|\leq A \mu\left(B\right)^{\gamma}, \label{lips}
\end{equation}
where $B$ is any ball containing both $x$ and $y$ and $A$ is a constant depending only on $f$.

We can see that this definition of Lipschitz recovers the Euclidean case only when $0<\gamma<\frac{1}{\n}$. In fact, unless $\gamma$ is sufficiently small, it can happen that the only functions satisfying (\ref{lips}) are the constants. But, as shown in \cite{CW1} there are situations where these spaces are not trivial. However, we are going to consider only $0<\gamma<\frac{\epsilon}{\n}$, since it is the range in which the atomic definition of Hardy coincides with the maximal function characterization. Let put
\begin{equation}
\left\| f\right\|_{\Lambda_{\gamma(\X)}}=
\inf\left\{A:(\ref{lips}) \text{ holds}\right\} 
\end{equation}
then $\left\|\cdot\right\|_{\Lambda_{\gamma(\X)}}$ is a norm on the set of equivalence classes of functions under the relation ''$b_{1}$ and $ b_{2}$ in $\Lambda_{\gamma}(\X)$ are equivalent if and only if $b_{1}-b_{2}$ is constant'', which we still denote  $\Lambda_{\gamma}(\X)$.

\section{A prerequisite about Orlicz spaces}
Let  
\begin{equation} 
\wp(t)=\frac{t}{\log\left(e+t\right)}\text{ for all } t>0.
\end{equation}
A $\mu$-measurable function $f:\X\rightarrow\bR$ is said to belong to the Orlicz space $L^{\wp}(\X)$ if
\begin{equation}
\left\|f\right\|_{L^{\wp}}:=\inf\left\{k>0:\int_{\X}\wp\left(k^{-1}\left|f(x)\right|\right)d\mu(x)\leq 1\right\}<\infty.
\end{equation}
It is easy to see that $L^{1}(\X)\subset L^{\wp}(\X)$. More precisely, we have
\begin{equation}
\left\|f\right\|_{L^{\wp}(\X)}\leq \left\|f\right\|_{L^{1}(\X)}.
\end{equation} 
 We are going to recall some results involved Orlicz spaces mention in \cite{BIJZ}, which are also valid in the context of space of homogeneous type.
 
\begin{enumerate}
	\item [(i)]If $\ExpL(\X)$ is the Orlicz space associated to the Orlicz  function $t\mapsto e^{t}-1$ and $\LlogL(\X)$ the one associated to $t\mapsto t\log(e+t)$ then we have the following H\"older type inequality
	\begin{equation}
	\left\|fg\right\|_{L^{\wp}(\X)}\leq 4\left\|f\right\|_{L^{1}(\X)}\left\|g\right\|_{\ExpL(\X)}
	\end{equation}
	 for all $f\in L^{\wp}(\X)$ and $g\in\ExpL(\X)$ using the elementary inequality
	 \begin{equation}\label{ineq1}
	 \frac{ab}{\log(e+ab)}\leq a+e^{b}-1 \text{ for all } a,b\geq0.
	 \end{equation}
	 We also have the duality between $\ExpL(\X)$ and $\LlogL(\X)$, that is 
	 \begin{equation}
	 \left\|fg\right\|_{L^{1}(\X)}\leq 2\left\|f\right\|_{\LlogL(\X)}\left\|g\right\|_{\ExpL(\X)},
	 \end{equation}
	 using the following inequalities
	 \begin{equation}\label{ineq2}
	 ab\leq a\log(1+a)+e^{b}-1  \text{ for all } a,b\geq 0. 
	 \end{equation}
	 
	 \item[(ii)] Since the Orlicz function $\wp$ we consider is not convex, the triangular inequality does not hold for $\left\|\cdot\right\|_{L^{\wp}(\X)}$. But we have the following substitute 
	 \begin{equation}\label{additivity}
	 \left\|f+g\right\|_{L^{\wp}(\X)}\leq 4\left\|f\right\|_{L^{\wp}(\X)}+4\left\|g\right\|_{L^{\wp}(\X)},
	 \end{equation}
	for $f,g\in L^{\wp}(\X)$. This relation remain valid if we replace the measure $\mu$ by any one absolutely continuous compared to $\mu$.
	 \item [(iii)]$L^{\wp}(\X)$ equipped with the metric
\begin{equation}
\mathfrak d(f,g):=\inf\left\{\delta>0:\int_{\X}\wp\left(\delta^{-1}\left|f(x)-g(x)\right|\right)d\mu(x)\leq \delta\right\}
\end{equation}
 is a complete linear metric space.
 \item [(iv)] If $\mathfrak d(f,g)\leq 1,$ then  
 \begin{equation}
 \left\|f-g\right\|_{L^{\wp}}\leq \mathfrak d(f,g)\leq 1.\label{controle-norme-distance}
 \end{equation}
 \item [(v)]A sequence $\left(f_{n}\right)_{n>0}$ converge in $L^{\wp}(\X)$ to $f$ if and only if  $\lim_{n\rightarrow \infty}\left\|f_{n}-f\right\|_{L^{\wp}}=0$.
\end{enumerate}

We define the Hardy-Orlicz space $\H^{\wp}(\X)$, to be the subset of  $\mathcal G^{\epsilon}_{0}(\beta,\gamma)'$ consists of distributions $f$ such that $f^{\ast}\in L^{\wp}(\X)$, and we put
\begin{equation}
\left\|f\right\|_{H^ {\wp}(\X)}:=\left\|f^{\ast}\right\|_{L^
{\wp}(\X)}.
\end{equation}
In \cite{V}, it is proved that this characterization of Hardy-Orlicz spaces coincide with some atomic characterization.
\begin{lem}\label{controle}
Let $\mathfrak b$ be in $\BMO(\X)$ . There exists a constant $C$ such that for every $(1,q)$-atom $\a$ supported in a ball $B$,
\begin{equation}
\left\|(\mathfrak b-\mathfrak b_{B})\a^{\ast}\right\|_{L^{1}(\X)}\leq C\left\|\mathfrak b\right\|_{\BMO(\X)}.
\end{equation}
\end{lem}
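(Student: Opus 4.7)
The plan is to split $\int_\X |\mathfrak b - \mathfrak b_B|\a^*(x)\,d\mu(x)$ into a local part over $\widetilde B := 2K_0 B$ and a tail part over $\X\setminus\widetilde B$. For the local piece I would apply H\"older's inequality with the exponent $q$ from the atom and its conjugate $q'$ to write
\begin{equation*}
\int_{\widetilde B} |\mathfrak b - \mathfrak b_B|\a^*\,d\mu \leq \Bigl(\int_{\widetilde B} |\mathfrak b - \mathfrak b_B|^{q'}\,d\mu\Bigr)^{1/q'}\Bigl(\int_\X(\a^*)^q\,d\mu\Bigr)^{1/q}.
\end{equation*}
The first factor is $\lesssim \mu(\widetilde B)^{1/q'}\|\mathfrak b\|_{\BMO(\X)}$ by (\ref{equibmo}), and the second is $\lesssim \|\a\|_{L^q(\X)} \leq \mu(B)^{1/q-1}$ by the $L^q$-boundedness of the grand maximal operator (for $q>1$) together with axiom (a2). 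After invoking doubling to replace $\mu(\widetilde B)$ by $\mu(B)$, the two powers of $\mu(B)$ cancel and the local contribution is bounded by $\|\mathfrak b\|_{\BMO(\X)}$.

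For the tail I would decompose $\X\setminus\widetilde B = \bigcup_{k\geq 1}(B_{k+1}\setminus B_k)$ with $B_k := (2K_0)^k B$, and establish first the pointwise decay
\begin{equation*}
\a^*(x) \lesssim \frac{(2K_0)^{-k\beta}}{\mu(B_k)} \qquad \text{for } x\in B_{k+1}\setminus B_k.
\end{equation*}
This follows from the cancellation $\int\a\,d\mu = 0$: for $\varphi$ with $\|\varphi\|_{\mathcal G(x,r,\beta,\gamma)}\leq 1$,
\begin{equation*}
\langle\a,\varphi\rangle = \int_B \a(y)\bigl[\varphi(y)-\varphi(x_B)\bigr]\,d\mu(y),
\end{equation*}
where $x_B$ denotes the center of $B$. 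Since $d(y,x_B)\leq r(B)\leq (r+d(x,y))/(2K_0)$ whenever $x\notin \widetilde B$ and $y\in B$, property (ii) of the test-function space (with base point $x$) yields, after discarding the harmless factor involving $\gamma$ by taking $r\to 0$,
\begin{equation*}
|\varphi(y)-\varphi(x_B)|\lesssim \Bigl(\frac{r(B)}{d(x,x_B)}\Bigr)^{\beta}\frac{1}{\mu(B(x_B,d(x,x_B)))}.
\end{equation*}
Using $\|\a\|_{L^1(\X)}\leq 1$ (from (a2) and H\"older) and the fact that $\mu(B(x_B,d(x,x_B)))\sim \mu(B_k)$ on the annulus, the claimed decay follows.

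Combining this with $|\mathfrak b - \mathfrak b_B| \leq |\mathfrak b - \mathfrak b_{B_{k+1}}| + |\mathfrak b_{B_{k+1}}-\mathfrak b_B|$, the standard bounds $|\mathfrak b_{B_{k+1}}-\mathfrak b_B|\lesssim (k+1)\|\mathfrak b\|_{\BMO(\X)}$ and $\int_{B_{k+1}} |\mathfrak b - \mathfrak b_{B_{k+1}}|\,d\mu \lesssim \mu(B_{k+1})\|\mathfrak b\|_{\BMO(\X)}$, together with doubling $\mu(B_{k+1})\lesssim \mu(B_k)$, show that the $k$-th annulus contributes at most $(k+1)(2K_0)^{-k\beta}\|\mathfrak b\|_{\BMO(\X)}$. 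Summing the convergent series $\sum_{k\geq 1}(k+1)(2K_0)^{-k\beta}$ then completes the argument. The main obstacle is establishing the pointwise decay of $\a^*$: the smoothness hypothesis of property (ii) must be verified uniformly in $y\in B$, $x\in B_{k+1}\setminus B_k$, and in the radius $r>0$ over which the grand maximal function takes its supremum, so that the cancellation of the atom is genuinely exploitable on every annulus. Once the decay is in hand, what remains is routine $\BMO$ bookkeeping.
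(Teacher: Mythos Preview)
Your proposal is correct and follows essentially the same route as the paper's proof: split into $2K_0 B$ and its complement, handle the local piece by H\"older plus the $L^q$-boundedness of the maximal operator together with (\ref{equibmo}), and handle the tail by the pointwise decay $\a^*(z)\lesssim (R/d(z,x_B))^{\beta}\mu(B(x_B,d(z,x_B)))^{-1}$ combined with the standard telescoping $|\mathfrak b_{(2K_0)^{k+1}B}-\mathfrak b_B|\lesssim (k+1)\|\mathfrak b\|_{\BMO}$. The only cosmetic differences are that the paper bounds $\a^*$ by the Hardy--Littlewood maximal function $\mathcal M\a$ (via Proposition~3.10 of \cite{GLY}) before invoking $L^q$-boundedness, and quotes the pointwise decay from Lemma~4.4 of \cite{GLY} rather than deriving it; your remark about ``taking $r\to 0$'' is unnecessary since the factor $(r/(r+d(x,x_B)))^\gamma$ is simply $\leq 1$ for every $r>0$.
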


\begin{proof}
Let $\mathfrak b\in \BMO(\X)$ and $\a$ a $(1,q)$-atom supported in $B=B_{(x_{0},R)}$. We have
\begin{equation}
\left\|\left(\mathfrak b-\mathfrak b_{B}\right)\mathfrak a^{\ast}\right\|_{L^{1}(\X)}=\int_{B_{(x_{0},2 K_{0}R)}}\left|\mathfrak b(z)-\mathfrak b_{B}\right|\mathfrak a^{\ast}(z)d\mu(z)+\int_{B^{c}(x_{0},2K_{0} R)}\left|\mathfrak b(z)-\mathfrak b_{B}\right|\mathfrak a^{\ast}(z)d\mu(z),\label{somme}
\end{equation}
where $B^{c}(x_{0},2K_{0}R)=\X\setminus B(x_{0},2 K_{0}R)$. Furthermore we have
\begin{equation}
\mathfrak a^{\ast}(z)\leq C \mathcal M\mathfrak a(z) \text{ for all } z\in \X,\label{max}
\end{equation}
where $\mathcal M\mathfrak a(z)=\sup_{B:B\ni z}\frac{1}{\mu(B)}\int_{B}\left|\a(x)\right|d\mu(x)$ denote the Hardy-Littlewood maximal function of $\a$,  according to Proposition 3.10 of \cite{GLY}. We also have
\begin{equation}
\a^{\ast}(z)\leq C\left(\frac{R}{d(z,x_{0})}\right)^{\beta}\frac{1}{\mu(B(z,d(z,x_{0})))},\text{ for all } z\notin B(x_{0},2K_{0} R),\label{maj}
\end{equation}

as it is shown in the proof of Lemma 4.4 of \cite{GLY}. If we take (\ref{max}) into first term of the sums (\ref{somme}) and use H$\ddot{\text{o}}$lder inequality with $1<q<\infty$, then we have
 
\begin{equation}
\int_{B_{(x_{0},2 K_{0}R)}}\left|\mathfrak b(z)-\mathfrak b_{B}\right|\a^{\ast}(z)d\mu(z)\leq\left(\int_{B(x_{0},2K_{0} R)}\left|\mathfrak b(z)-\mathfrak b_{B}\right|^{q'}d\mu(z)\right)^{\frac{1}{q'}}\left(\int_{\X}\mathcal M\a(z)^{q}d\mu(z)\right)^{\frac{1}{q}}.
\end{equation}

Since the Hardy-Littlewood maximal operator $\mathcal M$ is bounded in $L^{q}(\X)$, there exists a constant $C$ such that
\begin{equation}
\int_{B_{(x_{0},2 K_{0}R)}}\left|\mathfrak b(z)-\mathfrak b_{B}\right|\a^{\ast}(z)d\mu(z)\leq C\left\|\mathfrak b\right\|_{\BMO(\X)}
\end{equation}
 according to relation (\ref{equibmo}). 

On the other hand if we take (\ref{maj}) in the second term of (\ref{somme}) we have
\begin{equation}
\begin{aligned}
&\int_{B^{c}(x_{0},2 K_{0}R)}\left|\mathfrak b(z)-\mathfrak b_{B}\right|a^{\ast}(z)d\mu(z)\\
&\ \ \ \ \ \ \ \ \ \ \ \ \ \ \leq C\sum^{\infty}_{k=1}\int_{(2K^{0})^{k+1}B\setminus(2K_{0})^{k}B}\left(\frac{R}{d(z,x_{0})}\right)^{\beta}\frac{\left|\mathfrak b(z)-\mathfrak b_{B}\right|}{\mu(B(z,d(z,x_{0})))}d\mu(z)\\
&\ \ \ \ \ \ \ \ \ \ \ \ \ \ \leq C\sum^{\infty}_{k=1}(2K_{0})^{-k\beta}\left[\frac{1}{\mu((2K_{0})^{k+1}B)}\int_{(2K_{0})^{k+1}B}\left|\mathfrak b(z)-\mathfrak b_{(2K_{0})^{k+1}B}\right|d\mu(z)+\left|\mathfrak b_{(2K_{0})^{k+1}B}-\mathfrak b_{B}\right|\right],
\end{aligned}
\end{equation}
where the second inequality comes from the fact that $\mu(B(z,d(z,x_{0}))\sim\mu(B(x_{0},d(z,x_{0}))$.
Since the series $\sum^{\infty}_{k=1}(2K_{0})^{-k\beta}$ converges, we also have that there exists a constant $C$ not depending on $\mathfrak b$ and $\a$, such that
\begin{equation}
 \int_{B^{c}(x_{0},2 K_{0}R)}\left|\mathfrak b(z)-\mathfrak b_{B}\right|\a^{\ast}(z)d\mu(z)\leq C \left\|\mathfrak b\right\|_{\BMO(\X)},
 \end{equation}
  which end the proof. 
\end{proof} 

It is well known that the John-Nirenberg inequality is valid in the context of space of homogeneous type (see \cite{MMNO}). This inequality states that there exist constants $K_{1}$ and $K_{2}$ such that for any $\mathfrak b\in \BMO(\X)$ with $\left\|\mathfrak b\right\|_{\BMO(\X)}\neq 0$ and any ball $B\subset\X$, we have
\begin{equation}
\mu\left(\left\{x\in B:\left|\mathfrak b(x)-\mathfrak b_{B}\right|>\lambda\right\}\right)\leq K_{1}\exp\left(-\frac{K_{2}\lambda}{\left\|\mathfrak b\right\|_{\BMO(\X)}}\right)\mu(B) \text{ for all }\lambda>0.
\end{equation}
An immediate consequence of this inequality is that there is a constant $K_{3}$ depending only on the space constants, such that 
\begin{equation}
\frac{1}{\mu(B)}\int_{B}\exp\left(\frac{\left|\mathfrak b-\mathfrak b_{B}\right|}{K_{3}\left\|\mathfrak b\right\|_{\BMO(\X)}}\right)\leq 2.\label{jn}
\end{equation}
for all balls $B$ in $\X$ .

Notice that we can choose  $K_{3}$ as big as we like.

%
%
\begin{lem}\label{lemint}
Let $\mathbb B$ be the ball centered at $x_{0}$ with radius $1$. There exists a positive constant $K_{4}$  such that for any $\mathfrak b\in \BMO(\X)$ with $\left\|\mathfrak b\right\|_{\BMO(\X)}\neq 0$ we have 
\begin{equation}
\int_{X}\frac{e^{\frac{\left|\mathfrak b(x)-\mathfrak b_{\mathbb B}\right|}{K_{4}\left\|\mathfrak b\right\|_{\BMO(\X)}}}-1}{\left(1+d(x_{0},x)\right)^{2\n}}d\mu(x)\leq 1.
\end{equation}
\end{lem}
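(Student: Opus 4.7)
My approach is a dyadic annular decomposition of $\X$ around $x_{0}$. Set $B_{k}=B(x_{0},2^{k})$ for $k\geq 0$ (so that $B_{0}=\mathbb B$), $E_{0}=\mathbb B$, and $E_{k}=B_{k}\setminus B_{k-1}$ for $k\geq 1$. On $E_{k}$ with $k\geq 1$ one has $(1+d(x_{0},x))^{2\n}\geq 2^{2\n(k-1)}$, while the doubling inequality (\ref{RD}) gives $\mu(B_{k})\leq C_{\mu}2^{k\n}\mu(\mathbb B)$. The decisive feature is that the weight exponent $2\n$ strictly dominates the volume growth exponent $\n$, producing a summable geometric factor $(2^{-\n})^{k}$.

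On each piece I would re-centre the mean by writing $|\mathfrak b(x)-\mathfrak b_{\mathbb B}|\leq|\mathfrak b(x)-\mathfrak b_{B_{k}}|+|\mathfrak b_{B_{k}}-\mathfrak b_{\mathbb B}|$ and invoking the comparison $|\mathfrak b_{B_{k}}-\mathfrak b_{\mathbb B}|\leq C(1+k)\|\mathfrak b\|_{\BMO(\X)}$ recalled just before (\ref{jn}). This gives the pointwise bound
\begin{equation*}
e^{|\mathfrak b-\mathfrak b_{\mathbb B}|/(K_{4}\|\mathfrak b\|_{\BMO(\X)})}-1\leq e^{C(1+k)/K_{4}}\bigl(e^{|\mathfrak b-\mathfrak b_{B_{k}}|/(K_{4}\|\mathfrak b\|_{\BMO(\X)})}-1\bigr)+\bigl(e^{C(1+k)/K_{4}}-1\bigr).
\end{equation*}
Requiring $K_{4}\geq K_{3}$ and estimating the integral of $e^{|\mathfrak b-\mathfrak b_{B_{k}}|/(K_{4}\|\mathfrak b\|_{\BMO(\X)})}-1$ over $B_{k}$ through the layer-cake formula and the John--Nirenberg exponential tail estimate that underlies (\ref{jn}), one obtains a bound $\delta(K_{4})\mu(B_{k})$ with $\delta(K_{4})=O(1/K_{4})\to 0$ as $K_{4}\to\infty$; by the same calculation, the same $\delta(K_{4})$ controls $\int_{\mathbb B}(e^{|\mathfrak b-\mathfrak b_{\mathbb B}|/(K_{4}\|\mathfrak b\|_{\BMO(\X)})}-1)d\mu$ for the $k=0$ slice.

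Assembling the pieces, the contribution of $E_{k}$ (for $k\geq 1$) to the integral is at most
\begin{equation*}
C\mu(\mathbb B)(2^{-\n})^{k}\Bigl[\delta(K_{4})e^{C(1+k)/K_{4}}+\bigl(e^{C(1+k)/K_{4}}-1\bigr)\Bigr],
\end{equation*}
plus a $k=0$ term bounded by $\delta(K_{4})\mu(\mathbb B)$. Choosing $K_{4}$ so large that $2^{-\n}e^{C/K_{4}}\leq 1/2$ dominates the whole sum by a convergent geometric series whose every term vanishes as $K_{4}\to\infty$; a discrete dominated-convergence argument then lets me pick $K_{4}$, depending only on the space constants, so that the total is at most $1$. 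The main difficulty is this last quantitative step: a crude use of (\ref{jn}) only yields finiteness of the integral, and getting the sharp normalization by $1$ forces one to extract the rate $\delta(K_{4})\to 0$ from the John--Nirenberg inequality and to exploit the homogeneity of every estimate in $\|\mathfrak b\|_{\BMO(\X)}$ in order to make the resulting $K_{4}$ uniform over all non-trivial $\mathfrak b$.
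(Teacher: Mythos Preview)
Your proposal is correct and follows essentially the same route as the paper: a dyadic annular decomposition around $x_{0}$, re-centring of the mean via $|\mathfrak b_{B_{k}}-\mathfrak b_{\mathbb B}|\lesssim (1+k)\|\mathfrak b\|_{\BMO}$, John--Nirenberg on each $B_{k}$, and summation of a geometric series made convergent by the gap between the weight exponent $2\n$ and the volume-growth exponent $\n$. The only cosmetic difference is the handling of the final normalization: the paper first proves the integral with the fixed John--Nirenberg constant $K_{3}$ is bounded by $C\mu(\mathbb B)$ and then passes to $K_{4}$ in one stroke (implicitly via the convexity inequality $e^{t/\lambda}-1\le\lambda^{-1}(e^{t}-1)$ for $\lambda\ge 1$), whereas you track the $K_{4}$-dependence through every term and let $K_{4}\to\infty$; both arrive at the same conclusion.
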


\begin{proof}
 Let $\mathfrak b\in\BMO(\X)$ with $\left\|\mathfrak b\right\|_{\BMO(\X)}\neq 0$. We have
\begin{equation}
\int_{X}\frac{e^{\frac{\left|\mathfrak b(x)-\mathfrak b_{\mathbb B}\right|}{K_{3}\left\|\mathfrak b\right\|_{\BMO(\X)}}}-1}{\left(1+d(x_{0},x)\right)^{2\n}}d\mu(x)=\int_{\mathbb B}\frac{e^{\frac{\left|\mathfrak b(x)-\mathfrak b_{\mathbb B}\right|}{K_{3}\left\|\mathfrak b\right\|_{\BMO(\X)}}}-1}{\left(1+d(x_{0},x)\right)^{2\n}}d\mu(x)+\int_{\mathbb B^{c}}\frac{e^{\frac{\left|\mathfrak b(x)-\mathfrak b_{\mathbb B}\right|}{K_{3}\left\|\mathfrak b\right\|_{\BMO(\X)}}}-1}{\left(1+d(x_{0},x)\right)^{2\n}}d\mu(x),
\end{equation}
where $\mathbb B^{c}=\X\setminus \mathbb B$. The first term in the right hand side is less that $\mu(\mathbb B)$. for the second term, we have
\begin{eqnarray*}
\int_{\mathbb B^{c}}\frac{e^{\frac{\left|\mathfrak b(x)-\mathfrak b_{\mathbb B}\right|}{K_{3}\left\|\mathfrak b\right\|_{\BMO(\X)}}}-1}{\left(1+d(x_{0},x)\right)^{2\n}}d\mu(x)&=&\sum^{\infty}_{k=0}\int_{2^{k}\leq d(x_{0},x)<2^{k+1}}\frac{e^{\frac{\left|\mathfrak b(x)-\mathfrak b_{\mathbb B}\right|}{K_{3}\left\|\mathfrak b\right\|_{\BMO(\X)}}}-1}{\left(1+d(x_{0},x)\right)^{2\n}}d\mu(x)\\
&\leq&\sum^{\infty}_{k=0}2^{-2\n k}\int_{B(x_{0},2^{k+1})}\left(e^{\frac{\left|\mathfrak b(x)-\mathfrak b_{\mathbb B}\right|}{K_{3}\left\|\mathfrak b\right\|_{\BMO(\X)}}}-1\right)d\mu(x).
\end{eqnarray*} 
 Using the fact that $\left|\mathfrak b_{\mathbb B}-\mathfrak b_{B(x_{0},2^{k+1})}\right|\leq \log(2^{\frac{C_{0}(k+1)}{\log 2}})\left\|\mathfrak b\right\|_{\BMO(\X)}$ and $\mu(B(x_{0},2^{k+1})\leq 2^{(k+1)\log_{2}C_{0}}\mu(\mathbb B)$, we have the term we are estimated less than
 \begin{equation}
 C\mu\left(\mathbb B\right)\sum^{\infty}_{k=0}2^{(-\n+\frac{C_{0}}{K_{3}\log2}) k}.\label{maj}
 \end{equation} 
 Take $K_{3}>\frac{C_{0}}{\n \log2}$. Then the series (\ref{maj}) converges. Therefore, 
 \begin{equation}
 \int_{X}\frac{e^{\frac{\left|\mathfrak b(x)-\mathfrak b_{\mathbb B}\right|}{K_{3}\left\|\mathfrak b\right\|_{\BMO(\X)}}}-1}{\left(1+d(x_{0},x)\right)^{2\n}}d\mu(x)\leq C\mu\left(\mathbb B\right).
 \end{equation}
 The result follows.
 
\end{proof}

\medskip

Let us introduce the following measures
\begin{equation}
d\nu:=\frac{d\mu(x)}{\log(e+d(x_{0},x))}\text{ and } d\sigma(x):=\frac{d\mu(x)}{(1+d(x_{0},x))^{2\n}},
\end{equation}
where $\n$ is the dimension of $\X$.
 It follows from the above lemma that for $\mathfrak b\in \BMO(\X)$ we have 
\begin{equation}
\left\|\mathfrak b-\mathfrak b_{\mathbb B}\right\|_{\ExpL(\X,\sigma)}\leq C\left\|\mathfrak b\right\|_{\BMO(\X)}.\label{bmo-exp}
\end{equation}
We can also see that for a $\nu$-measurable function $f$, we have
\begin{equation}\label{compare}
\left\|f\right\|_{L^{\wp}(\X,\nu)}\leq\left\|f\right\|_{L^{1}(\X)}.
\end{equation}

The next result is the analogous of Lemma 3.2 of \cite{BIJZ} in the context of spaces of homogeneous type, and its proof is just an adaptation of the one give in that paper. 

\begin{lem}\label{normp}
Let $f\in \ExpL (\X,\sigma)$. Then for $g\in L^{1}(\X)$ we have $g\cdot f\in L^{\wp}(\X,\nu)$ and 
\begin{equation}
\left\|g\cdot f\right\|_{L^{\wp}(\X,\nu)}\leq C\left\|g\right\|_{L^{1}(\X)}\left\|f\right\|_{\ExpL(\X,\sigma)}.\label{holder1}
\end{equation}
If moreover $f\in \BMO(\X)$ then 
\begin{equation}
\left\|g\cdot f\right\|_{L^{\wp}(\X,\nu)}\leq C\left\|g\right\|_{L^{1}(\X)}\left\|f\right\|_{\BMO^{+}(\X)}.\label{holder2}
\end{equation}

\end{lem}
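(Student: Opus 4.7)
The plan is to establish~(\ref{holder1}) first and then deduce~(\ref{holder2}) from it. For~(\ref{holder1}) I would, by homogeneity, normalise $\|g\|_{L^1(\X)}=1$ and $\|f\|_{\ExpL(\X,\sigma)}=1$, and find a universal constant $C$ (depending only on the constants of $\X$) such that $\int_\X \wp(|gf|/C)\,d\nu\le 1$.

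The key step is to split $\X=E_1\cup E_2$ where $E_1:=\{x\in\X:\,|f(x)|\le M\log(e+d(x_0,x))\}$ for a threshold $M$ to be chosen. On $E_1$ the trivial bound $\wp(t)\le t$ yields $\int_{E_1}\wp(|gf|/C)\,d\nu\le(1/C)\int_{E_1}|g||f|/\log(e+d(x_0,\cdot))\,d\mu\le(M/C)\|g\|_{L^1(\X)}=M/C$. On $E_2$ I would apply~(\ref{ineq1}) with $a=|g|/A$ and $b=|f|/B$, where $AB=C$, to obtain pointwise $\wp(|gf|/C)\le|g|/A+(e^{|f|/B}-1)$; integrating against $d\nu$ and using $d\nu\le d\mu$, the first summand contributes at most $1/A$. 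For the exponential summand I would exploit that on $E_2$ one has $\log(e+d(x_0,\cdot))\le|f|/M$, hence $(1+d(x_0,\cdot))^{2\n}\le e^{2\n|f|/M}$ and therefore $d\nu\le d\mu\le e^{2\n|f|/M}\,d\sigma$ on $E_2$. Setting $\alpha:=1/B+2\n/M$ and ensuring $\alpha\le 1$, this yields $\int_{E_2}(e^{|f|/B}-1)\,d\nu\le\int_\X(e^{\alpha|f|}-1)\,d\sigma\le\alpha\int_\X(e^{|f|}-1)\,d\sigma\le\alpha$, via the convexity estimate $e^{\alpha t}-1\le\alpha(e^t-1)$ valid for $\alpha\in(0,1]$ together with the normalisation of $\|f\|_{\ExpL(\X,\sigma)}$. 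Summing the three contributions gives $M/C+1/A+\alpha$, and a balanced choice (such as $A=B$, $M$ of order $A\sqrt{\n}$, $C=AB$) makes this at most $1$, proving~(\ref{holder1}).

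For~(\ref{holder2}) I would split $f=(f-f_{\mathbb B})+f_{\mathbb B}$ and invoke the quasi-triangle inequality~(\ref{additivity}) for $L^\wp(\X,\nu)$ to get $\|gf\|_{L^\wp(\X,\nu)}\le 4\|g(f-f_{\mathbb B})\|_{L^\wp(\X,\nu)}+4|f_{\mathbb B}|\,\|g\|_{L^\wp(\X,\nu)}$. The first piece is handled by~(\ref{holder1}) combined with~(\ref{bmo-exp}), giving $C\|g\|_{L^1(\X)}\|f\|_{\BMO(\X)}$. The second is controlled by~(\ref{compare}) and $d\nu\le d\mu$, bounding it by $|f_{\mathbb B}|\|g\|_{L^1(\X)}$. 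Summing yields the announced $\BMO^+$ estimate.

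The main obstacle I anticipate is the mismatch between the two weights: $f$ is controlled in the $\ExpL$ sense only with respect to the polynomially decaying measure $\sigma$, while the target norm is taken with respect to the much larger measure $\nu$, so $f$ need not lie in $\ExpL(\X,\nu)$ and no direct Orlicz--H\"older argument can apply. The splitting $E_1,E_2$ is tailored so that on $E_2$ the polynomial surplus $(1+d(x_0,\cdot))^{2\n}$ appearing in $d\mu/d\sigma$ can be absorbed by part of the exponential $e^{|f|}$ coming from the $\ExpL(\X,\sigma)$ condition. In parallel, the sharper convexity bound $e^{\alpha t}-1\le\alpha(e^t-1)$ (rather than the crude $e^{\alpha t}-1\le e^t-1$) is indispensable: without it a residual constant of size one would remain that cannot be absorbed by merely enlarging $C$.
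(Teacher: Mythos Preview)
Your argument for~(\ref{holder1}) is correct, but it follows a genuinely different route from the paper's. The paper never splits the domain: instead it combines the two logarithms directly via the elementary inequalities
\[
2\n\log(e+d(x_0,x))>\log\bigl(e+(1+d(x_0,x))^{2\n}\bigr),
\qquad
\log(e+a)\log(e+b)>\tfrac12\log(e+ab),
\]
so that the product $\log(e+|fg|/(AB))\cdot\log(e+d(x_0,x))$ becomes a single $\log$ with $(1+d(x_0,x))^{2\n}$ already inside. After that a single pointwise application of~(\ref{ineq1}) places the polynomial weight exactly where $d\sigma$ is needed, and the integral closes with $A=B=8\n$ and $C=64\n^2$. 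Your level-set decomposition $E_1\cup E_2$ achieves the same transfer from $\nu$ to $\sigma$ by a different mechanism: on $E_1$ you cash in $|f|$ against the $\log$-weight of $\nu$, while on $E_2$ you absorb the polynomial discrepancy $(1+d)^{2\n}$ into a spare piece of the exponential, exploiting the convexity bound $e^{\alpha t}-1\le\alpha(e^t-1)$ to keep the constants finite. The paper's approach is slicker and gives an explicit constant in one stroke; yours is more hands-on but makes the weight-mismatch issue you describe very transparent, and would adapt more easily to other pairs of weights where no neat product-of-logs identity is available.

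For~(\ref{holder2}) your argument coincides with the paper's: split $f=(f-f_{\mathbb B})+f_{\mathbb B}$, apply~(\ref{additivity}), and control the two pieces via~(\ref{holder1}) together with~(\ref{bmo-exp}) and~(\ref{compare}).
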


\begin{proof}
Let $f\in \ExpL(\X,\sigma)$ and $g\in L^{1}(\X)$. If $\left\|g\right\|_{L^{1}(\X)}=0$ or $\left\|f\right\|_{\ExpL(\X,\sigma)}=0$ then there is nothing to prove. Thus we assume that $\left\|g\right\|_{L^{1}(\X)}\left\|f\right\|_{\ExpL(\X,\sigma)}\neq 0$.
 Let us put $A=8\n\left\|g\right\|_{L^{1}(\X)}$ and $B=8\n\left\|f\right\|_{\ExpL(\X,\sigma)}$. We are going to prove that the constant $C$ is $64\n^{2}$. For this it is sufficient to prove that

\begin{equation}
\int_{\X}\frac{\frac{1}{AB}\left|fg\right|
d\mu(x)}{\log\left(e+\frac{1}{AB}\left|fg\right|\right) \log(e+d(x_{0},x))}\leq 1.
\end{equation}

For this purpose, we will use the following elementary inequality :
\begin{equation}\label{loge-logex}
2\n\log(e+d(x_{0},x))>\log(e+(1+d(x_{0},x))^{2\n})\text{ for all } x\in\X,
\end{equation}
and for all  $a,b>0$,
\begin{equation}\label{productlog}
\log (e+a)\log(e+b)>\frac{1}{2}\log (e+ab).
\end{equation}  
It comes from the relation (\ref{loge-logex}) that 
\begin{equation}
\frac{\frac{1}{AB}\left|fg\right|
}{\log\left(e+\frac{1}{AB}\left|fg\right|\right) \log(e+d(x_{0},x))}\leq\frac{\frac{2\n}{AB}\left|fg\right|
}{\log\left(e+\frac{1}{AB}\left|fg\right|\right) \log(e+(1+d(x_{0},x))^{2\n})}
\end{equation}

so that applying relation (\ref{productlog}) to the left hand side of the inequality, yields

\begin{eqnarray*}
\frac{\frac{1}{AB}\left|fg\right|
}{\log\left(e+\frac{1}{AB}\left|fg\right|\right) \log(e+d(x_{0},x))}&\leq&\frac{\frac{4\n}{AB}\left|fg\right|
}{\log\left(e+\frac{1}{AB}\left|fg\right|(1+d(x_{0},x))^{2\n}\right)}\\
&\leq& 4\n\frac{\left|g\right|}{B}+\frac{4\n\left(e^{\frac{\left|f\right|}{A}}-1\right)}{(1+d(x_{0},x))^{2\n}},
\end{eqnarray*}

according to relation (\ref{ineq1}).
Taking the integral of both sides we obtain inequality (\ref{holder1}), since $$\frac{4\n\left(e^{\frac{\left|f\right|}{A}}-1\right)}{(1+d(x_{0},x))^{2\n}}\leq\frac{1}{2}\frac{\left(e^{8\n\frac{\left|f\right|}{A}}-1\right)}{(1+d(x_{0},x))^{2\n}}=\frac{1}{2}\frac{\left(e^{\frac{\left|f\right|}{\left\|f\right\|_{\ExpL(\X,\sigma)}}}-1\right)}{(1+d(x_{0},x))^{2\n}},$$
and 
$$ 4\n\frac{\left|g\right|}{B}=\frac{1}{2}\frac{\left|g\right|}{\left\|g\right\|_{L^{1}(\X)}}.$$
The inequality (\ref{holder2}) is also trivial if $\left\|f\right\|_{\BMO(\X)}=0$. 
Thus we assume that $f$ is not constant almost everywhere and we put  $f\cdot g=(f-f_{\mathbb B})\cdot g+f_{\mathbb B}\cdot g$, so that using relation (\ref{additivity}), relation (\ref{holder1}) and (\ref{bmo-exp}), we have
\begin{eqnarray*}
\left\|f\cdot g\right\|_{L^{\wp}(\X,\nu)}&\leq& C\left(\left\|(f-f_{\mathbb B})\cdot g\right\|_{L^{\wp}(\X,\nu)}+\left\|f_{\mathbb B}\cdot g\right\|_{L^{\wp}(\X,\nu)}\right)\\
&\leq& C\left(\left\|f-f_{\mathbb B}\right\|_{\ExpL(\X,\sigma)}\left\|g\right\|_{L^{1}(\X)}+\left|f_{\mathbb B}\right|\left\| g\right\|_{L^{1}(\X)}\right)\\
&\leq&C\left\| g\right\|_{L^{1}(\X,\nu)}\left\|f\right\|_{\BMO^{+}(\X)},
\end{eqnarray*}
which complete our proof.
\end{proof}

\section{ Proof of our main result} 
%
%
%
\begin{proof}[\textbf{Proof of Theorem \ref{main}}]   
Let $\mathfrak b\in \BMO(\X)$ and $\mathfrak h=\sum^{\infty}_{i=1}\lambda_{i}\a_{i}\in\H^{1}(\X)$, where $(\a_{i})_{i\geq 1}$ is a sequence of $(p,\infty)$-atoms, with $\a_{i}$ supported in the ball $B_{i}$, and $(\lambda_{i})_{i\geq 1}$ a sequence of scalars such that $\sum^{\infty}_{i=1}\left|\lambda_{i}\right|<\infty$. To prove our theorem, it is enough to show that the series
\begin{equation}
\sum^{\infty}_{i=1}\lambda_{i}\left(\mathfrak b-\mathfrak b_{B_{i}}\right)\a_{i}\text{ and }\sum^{\infty}_{i=1}\lambda_{j}\mathfrak b_{B_{i}}a_{i}
\end{equation}
are convergent in $L^{1}(\X)$ and $\H^{\wp}(X,\nu)$ respectively, since the product $\mathfrak b\times\mathfrak h$ by definition is the sum of both series.

The convergence of the first series in $L^{1}(\X)$ is immediate, since for all index  $i$ we have
 \begin{equation}
 \left\|\lambda_{i}\left(\mathfrak b-\mathfrak b_{B_{i}}\right)\a_{i}\right\|_{L^{1}(\X)}\leq \left|\lambda_{i}\right|\left\|\mathfrak b\right\|_{\BMO(\X)}\text{ and } \sum^{\infty}_{i=1}\left|\lambda_{i}\right|<\infty,
 \end{equation}
 according to Lemma \ref{controle}. For the second series, we consider the partial sum
\begin{equation}
S^{\ell}_{k}:=\sum^{\ell}_{i=k}\lambda_{i}\a_{i}\mathfrak b_{B_{i}}\label{partiel}\text{ for }k<\ell.
\end{equation} 
Our series converges in $\H^{\wp}(\X,\nu)$ if and only if 
$\lim_{k\rightarrow\infty}\left\|\left(S^{\ell}_{k}\right)^{\ast}\right\|_{L^{\wp}(\X,\nu)}=0$. But we have
\begin{equation*}
\left(S^{\ell}_{k}\right)^{\ast} \leq \sum^{\ell}_{i=k}\left|\lambda_{i}\right|\left(\a_{i}\mathfrak b_{B_{i}}\right)^{\ast}
\leq\sum^{\ell}_{i=k}\left|\lambda_{i}\right|\left|\mathfrak b-\mathfrak b_{B_{i}}\right|(\a_{i})^{\ast}+\left(\sum^{\ell}_{i=k}\left|\lambda_{i}\right|(\a_{i})^{\ast}\right)\left|\mathfrak b\right|,
\end{equation*}
so that
\begin{eqnarray*}
\left\|\left(S^{\ell}_{k}\right)^{\ast}\right\|_{L^{\wp}(\X,\nu)} &\leq& C\left[\left\|\sum^{\ell}_{j=k}\left|\lambda_{i}\right|\left|\mathfrak b-\mathfrak b_{B_{i}}\right|(\a_{i})^{\ast}\right\|_{L^{\wp}(\X,\nu)}+\left\|\left(\sum^{\ell}_{i=k}\left|\lambda_{i}\right|(\a_{i})^{\ast}\right)\left|\mathfrak b\right|\right\|_{L^{\wp}(\X,\nu)}\right]\\
&\leq& C\left[\left\|\sum^{\ell}_{i=k}\left|\lambda_{i}\right|\left|\mathfrak b-\mathfrak b_{B_{i}}\right|(\a_{i})^{\ast}\right\|_{L^{1}(\X)}+\left\|\left(\sum^{\ell}_{i=k}\left|\lambda_{i}\right|(\a_{i})^{\ast}\right)\left|\mathfrak b\right|\right\|_{L^{\wp}(\X,\nu)}\right]\\
&\leq& C\left\|\mathfrak b\right\|_{\BMO^{+}(\X)}\sum^{\ell}_{i=k}\left|\lambda_{i}\right|,
\end{eqnarray*}

where the last inequality come from Lemma \ref{controle} and  Lemma \ref{normp}.
It comes out that,
\begin{equation}
\lim_{k\rightarrow \infty}\left\|\left(S^{\ell}_{k}\right)^{\ast}\right\|_{L^{\wp}}\leq C\left\|\mathfrak b\right\|_{\BMO^{+}(\X)}\lim_{k\rightarrow\infty}\sum^{\ell}_{i=k}\left|\lambda_{i}\right|=0,
\end{equation}
since $\sum^{\infty}_{i=1}\left|\lambda_{i}\right|<\infty$.

 \end{proof}

 If we replace $BMO(\X)$ by $\bmo(\X)$, then we obtain that the Hardy-Orlicz space does not depend on a weight. More precisely, we obtain the following result
 \begin{prop}\label{bmo-hp}
 For  $\mathfrak b$ in $\bmo(\X)$ and $\mathfrak h$ in $\H^{1}(\X)$, we can give a meaning to the product $\mathfrak b\times \mathfrak h$ in the sense of distribution. Furthermore,  
\begin{equation}
\mathfrak b\times \mathfrak h\in L^{1}(\X)+\H^\wp (\X).
\end{equation}
 \end{prop}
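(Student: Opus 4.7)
The plan is to follow the proof of Theorem~\ref{main} essentially verbatim, substituting one weighted step by an unweighted analog. Begin with an atomic decomposition $\mathfrak{h}=\sum_{i\geq 1}\lambda_i\a_i$ of $\mathfrak{h}\in\H^1(\X)$ using $(1,\infty)$-atoms $\a_i$ supported in balls $B_i$, with $\sum|\lambda_i|<\infty$, and split
\begin{equation*}
\mathfrak{b}\times\mathfrak{h}=\sum_{i=1}^\infty\lambda_i(\mathfrak{b}-\mathfrak{b}_{B_i})\a_i+\sum_{i=1}^\infty\lambda_i\mathfrak{b}_{B_i}\a_i.
\end{equation*}
Since $\bmo(\X)\subset\BMO(\X)$, Lemma~\ref{controle} applies (with the $\BMO$ norm bounded by the larger $\bmo$ norm), so the first series converges in $L^1(\X)$ by the same argument as in Theorem~\ref{main}.

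The heart of the matter is showing the second series converges in $\H^\wp(\X)$ rather than only in $\H^\wp(\X,\nu)$. Inspecting the proof of Theorem~\ref{main}, this reduces to establishing an unweighted version of Lemma~\ref{normp}: for all $f\in\bmo(\X)$ and $g\in L^1(\X)$,
\begin{equation*}
\|fg\|_{L^\wp(\X)}\leq C\|f\|_{\bmo(\X)}\|g\|_{L^1(\X)}.
\end{equation*}
This is where the $\bmo$ hypothesis pays off. By definition, $|f_B|\lesssim\|f\|_{\bmo}$ for every ball $B$ of radius at least $1$, so combining this uniform control of large-scale averages with the John--Nirenberg inequality applied on unit balls yields a spatially uniform local $\ExpL$ bound: there exists $K>0$ such that
\begin{equation*}
\sup_{x\in\X}\frac{1}{\mu(B(x,1))}\int_{B(x,1)}\bigl(e^{|f(y)|/K\|f\|_{\bmo}}-1\bigr)d\mu(y)\leq C,
\end{equation*}
with no need for the decaying weight $\sigma$ used in Lemma~\ref{lemint}.

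To pass from this uniform local estimate to the global H\"older-type inequality, I would use a locally finite cover of $\X$ by unit balls $\{B(x_k,1)\}_{k}$ with bounded overlap (supplied, for instance, by a Whitney-type construction using the doubling property of $\mu$) and a subordinate partition of unity $\{\chi_k\}$. Writing $g=\sum_k g\chi_k$, each $g\chi_k$ is supported in $B(x_k,1)$, and one applies the pointwise inequality~\eqref{ineq1} on each $B(x_k,1)$ with $a=|g(x)|/A$ and $b=|f(x)|/B$ for suitable $A\sim\|g\|_{L^1(\X)}$ and $B\sim\|f\|_{\bmo}$, then integrates and sums over $k$, invoking the bounded overlap to control the total by a constant multiple of $\|g\|_{L^1(\X)}\|f\|_{\bmo(\X)}$.

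The main obstacle will be executing this partition-of-unity argument cleanly, tracking the universal constants so that the final estimate involves only $\|f\|_{\bmo}$ and $\|g\|_{L^1(\X)}$ without residual logarithmic factors, and ensuring that the subadditivity property~\eqref{additivity} for $L^\wp$ interacts correctly with the decomposition $g=\sum g\chi_k$. Once this unweighted H\"older inequality is in hand, the rest of the proof --- bounding the partial sums $S^\ell_k=\sum_{i=k}^\ell\lambda_i\mathfrak{b}_{B_i}\a_i$ in $\H^\wp(\X)$ by splitting $(S^\ell_k)^\ast$ into the two pieces as in Theorem~\ref{main} and exploiting $\sum_{i=k}^\ell|\lambda_i|\to0$ --- goes through mutatis mutandis, yielding $\mathfrak{b}\times\mathfrak{h}\in L^1(\X)+\H^\wp(\X)$.
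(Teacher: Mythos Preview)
Your overall architecture matches the paper's proof exactly: same atomic splitting, same reduction to showing that $|\mathfrak b|\psi\in L^{\wp}(\X)$ for $\psi=\sum|\lambda_i|\a_i^{\ast}\in L^1(\X)$, and the same local-to-global philosophy via a cover by unit-scale balls. The paper even uses the same covering idea (balls of measure~$1$, summed with bounded overlap).

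However, the specific way you propose to handle the local estimate has a genuine gap. Applying inequality~\eqref{ineq1} with $a=|g(x)|/A$ and $b=|f(x)|/B$ on each ball $B_k$ gives
\[
\int_{B_k}\wp\!\left(\frac{|fg|}{AB}\right)d\mu\ \le\ \frac{1}{A}\int_{B_k}|g|\,d\mu\ +\ \int_{B_k}\bigl(e^{|f|/B}-1\bigr)\,d\mu.
\]
Your uniform local $\ExpL$ bound controls the second integral by $C\mu(B_k)$, not by anything involving $g$. When you sum over the cover, the first terms add up to $N\|g\|_{L^1}/A$, but the second terms add up to $C\sum_k\mu(B_k)=\infty$ since $\mu(\X)=\infty$. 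Bounded overlap does not rescue this; the exponential side of \eqref{ineq1} simply does not localize with $g$, so the summed bound diverges. The subadditivity \eqref{additivity} is no help either, because iterating it over infinitely many pieces blows up the constant.

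The paper avoids this by a different local estimate. It first splits on $\{|\mathfrak b|\le 1\}$ (where $\wp(|\mathfrak b|\psi)\le\psi$ trivially) versus $\{|\mathfrak b|>1\}$ (where $\wp(|\mathfrak b|\psi)\le |\mathfrak b|\cdot\psi/\log(e+\psi)$), and on the latter set invokes the $\ExpL$--$\LlogL$ duality on each ball $B$ of measure~$1$:
\[
\int_{B}|\mathfrak b|\,\frac{\psi}{\log(e+\psi)}\,d\mu\ \le\ 2\,\|\mathfrak b\|_{\ExpL(B)}\,\Bigl\|\frac{\psi}{\log(e+\psi)}\Bigr\|_{\LlogL(B)}\ \lesssim\ \|\mathfrak b\|_{\bmo}\,\|\psi\|_{L^1(B)}.
\]
The crucial difference is that this bound carries the factor $\|\psi\|_{L^1(B)}$ rather than $\mu(B)$, so summing over an almost-disjoint cover yields $\|\mathfrak b\|_{\bmo}\|\psi\|_{L^1(\X)}<\infty$. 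If you replace your use of \eqref{ineq1} by this duality argument (or equivalently work directly with \eqref{ineq2}), the rest of your proof goes through unchanged.
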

 
 \begin{proof}
 The proof is almost similar to the one of Theorem \ref{main}. Let 
  $\mathfrak h\in\H^{1}(\X)$ be as in the previous theorem. We have for all $i$
 \begin{equation}
 \left\|(\mathfrak b-\mathfrak b_{B_{i}})\a_{i}\right\|_{L^{1}(\X)}\leq 2\left\|\mathfrak b\right\|_{\bmo(\X)},
 \end{equation}
 so that $\sum^{\infty}_{i=1}(\mathfrak b-\mathfrak b_{B_{i}})\a_{i}$ converge normally in $L^{1}(\X)$. 
 
 Since for all $i$ we have
 \begin{equation}
 \left(\mathfrak b_{B_{i}}\a_{i}\right)^{\ast}\leq \left|\mathfrak b-\mathfrak b_{B_{i}}\right|\a^{\ast}_{i}+\left|\mathfrak b\right|\a^{\ast}_{i},
 \end{equation}
 it follows that if 
 \begin{equation}
 \left|\mathfrak b\right|\left(\sum^{\infty}_{i=1}\lambda_{i}\a^{\ast}_{i}\right)
 \end{equation}
 belongs to $L^{\wp}(\X,d,\mu)$, then
 \begin{equation}
 \sum^{\infty}_{i=1}\lambda_{i}\mathfrak b_{B_{i}}\a_{i}
 \end{equation}
 converge in $\H^{\wp}(\X)$, since according to Lemma \ref{controle}, $\sum\lambda_{i}\left|\mathfrak b-\mathfrak b_{B_{i}}\right|\a^{\ast}_{i}$ converge normally in $L^{1}(\X)$ and therefore in $L^{\wp}(\X)$.
 Let us put $\psi=\left|\sum^{\infty}_{i=1}\lambda_{i}\a^{\ast}_{i}\right|\in L^{1}(\X)$, and consider a ball $B$ such that $\mu(B)=1$. We have, as proved in \cite{BF} that
 \begin{equation}
 \int_{B}\wp(\left|\mathfrak b\right|\psi)d\mu=\int_{B}\frac{\left|\mathfrak b\right|\psi}{\log(e+\left|\mathfrak b\right|\psi)}d\mu\leq C\left\|\mathfrak b\right\|_{\bmo(\X)}\int_{B}\psi d\mu.
 \end{equation}
 
 In fact, we have
 \begin{equation}
 \int_{B}\frac{\left|\mathfrak b\right|\psi}{\log(e+\left|\mathfrak b\right|\psi)}d\mu\leq \int_{B\cap\left\{\left|\mathfrak b\right|\leq1\right\}}\psi d\mu+\int_{B\cap\left\{\left|\mathfrak b\right|>1\right\}}\left|\mathfrak b\right|\frac{\psi}{\log(e+\psi)}d\mu.
 \end{equation}
 Since $\mathfrak b\in\bmo(\X,d,\mu)$ implies by the John-Nirenberg inequality (\ref{jn}) that there is a constant $C$ depending only on the space constant, such that $\left\|\mathfrak b\right\|_{\ExpL(B)}\leq C\left\|\mathfrak b\right\|_{\bmo(\X)}$ and $\left\|\frac{\psi}{\log(e+\psi)}\right\|_{\LlogL(B)}\leq\left\|\psi\right\|_{L^{1}(B)}$, the result follow from the duality between $\ExpL(B)$ and $\LlogL(B)$. 
 This being true for all ball $B$ of measure $1$, we take the sum over all such ball which are almost disjoint.
 
 \end{proof}
 
Let us consider now the Hardy space $\H^{p}(\X)$, with $p<1$. We have the following result

\begin{thm}\label{lipschitzhom}
Let $\frac{\n}{\n+1}<p<1$. For $f\in\Lambda_{\frac{1}{p}-1}(\X)$ and $g\in\H^{p}(\X)$ we can give a meaning to the product $f\times g$ as a distribution. Moreover, we have the inclusion
\begin{equation}
f\times g\in L^{1}(\X)+\H^{p}(\X,d,\tau),\text{ where }d\tau(x)=\frac{d\mu(x)}{\left(2K^{2}_{0}+K_{0}d(x_{0},x)\right)^{(1-p)\n}}.
\end{equation}
\end{thm}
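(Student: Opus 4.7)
The proof follows the same pattern as the proof of Theorem \ref{main}. Start with a $(p,\infty)$-atomic decomposition $g = \sum_{i \geq 1} \lambda_i \a_i$, where each $\a_i$ is supported in a ball $B_i = B(y_i, R_i)$ and $\sum_i |\lambda_i|^p < \infty$. Setting $f_{B_i} := \frac{1}{\mu(B_i)} \int_{B_i} f \, d\mu$, the plan is to split
\begin{equation*}
\sum_i \lambda_i f \a_i = \sum_i \lambda_i (f - f_{B_i}) \a_i + \sum_i \lambda_i f_{B_i} \a_i =: S_1 + S_2,
\end{equation*}
and to show that $S_1$ converges in $L^1(\X)$ while $S_2$ converges in $\H^p(\X, d, \tau)$. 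Together, these produce $f \times g$ as a distribution lying in $L^1(\X) + \H^p(\X, d, \tau)$.

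For $S_1$, the defining inequality of $\Lambda_{\frac{1}{p} - 1}(\X)$ yields $|f(x) - f_{B_i}| \leq \|f\|_{\Lambda_{\frac{1}{p} - 1}} \mu(B_i)^{\frac{1}{p} - 1}$ for $x \in B_i$. Combined with $\|\a_i\|_{L^\infty} \leq \mu(B_i)^{-\frac{1}{p}}$ and the support condition, this gives $\|(f - f_{B_i}) \a_i\|_{L^1(\X)} \leq \|f\|_{\Lambda_{\frac{1}{p} - 1}}$. Since $\sum_i |\lambda_i|^p < \infty$ forces $|\lambda_i| \to 0$, we have $|\lambda_i| \leq |\lambda_i|^p$ for all but finitely many $i$, hence $\sum_i |\lambda_i| < \infty$ (the inclusion $\ell^p \subset \ell^1$ valid for $0 < p < 1$). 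Therefore $\|S_1\|_{L^1(\X)} \leq \|f\|_{\Lambda_{\frac{1}{p} - 1}} \sum_i |\lambda_i| < \infty$.

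For $S_2$, the core estimate I aim to establish is the uniform bound
\begin{equation*}
|f_{B_i}|^p \, \|\a_i^*\|_{L^p(\X, \tau)}^p \leq C,
\end{equation*}
independent of $i$. Given this, the $p$-subadditivity of $t \mapsto t^p$ for $0 < p \leq 1$ applied to $S_2^* \leq \sum_i |\lambda_i| |f_{B_i}| \a_i^*$ will yield $\|S_2^*\|_{L^p(\X, \tau)}^p \leq \sum_i |\lambda_i|^p |f_{B_i}|^p \|\a_i^*\|_{L^p(\X, \tau)}^p \leq C \sum_i |\lambda_i|^p < \infty$. To bound $|f_{B_i}|$, one may assume (after subtracting a constant from $f$, which only changes $f \times g$ by a multiple of $g \in \H^p(\X) \subset \H^p(\X, d, \tau)$) that $f(x_0) = 0$; then (\ref{lips}) applied to the ball $B(x_0, C(R_i + d(x_0, y_i)))$, together with (\ref{norminf})--(\ref{normsup}), gives $|f_{B_i}| \lsim (1 + R_i + d(x_0, y_i))^{\n(\frac{1}{p} - 1)}$. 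For $\|\a_i^*\|_{L^p(\tau)}$, I split the integration over $2K_0 B_i$ and its complement: on $2K_0 B_i$ use $\a_i^* \lsim \mathcal{M}\a_i$ together with the $L^q$-boundedness of $\mathcal{M}$ (as in the first part of Lemma \ref{controle}); on $(2K_0 B_i)^c$ use the pointwise decay $\a_i^*(z) \lsim (R_i / d(z, y_i))^\beta \mu(B(z, d(z, y_i)))^{-\frac{1}{p}}$, the $(p, \infty)$-analogue of (\ref{maj}) derived from the mean-zero and normalization of $\a_i$, and the condition $\beta > \n(\frac{1}{p} - 1)$ from (\ref{dist}) to secure convergence of the resulting integral.

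The main obstacle is this combined estimate: one must track, across the two regions and whether or not $B_i$ lies near $x_0$, the cancellation between the Lipschitz-type growth of $|f_{B_i}|^p$ (of order $\n(1-p)$ in $1 + R_i + d(x_0, y_i)$) and the decay contributed by the weight $\tau$ (of the same order $\n(1-p)$ at distance $d(x_0, y_i)$ from $x_0$), while simultaneously exploiting $\beta > \n(\frac{1}{p} - 1)$ to kill the tail integrals. The weight $(2K_0^2 + K_0 d(x_0, \cdot))^{(1-p)\n}$ defining $\tau$ is precisely calibrated to absorb this growth, which is what makes the result new even in the Euclidean case.
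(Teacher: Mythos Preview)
Your treatment of $S_1$ is correct and essentially matches the paper's. The real divergence is in $S_2$. The paper does \emph{not} attempt to prove the uniform bound $|f_{B_i}|^p\,\|\a_i^*\|_{L^p(\tau)}^p\le C$. Instead it uses the pointwise inequality $|f_{B_i}|\le |f(x)-f_{B_i}|+|f(x)|$ to write
\[
(S_k^\ell)^*\le \sum_{i=k}^\ell |\lambda_i|\,|f-f_{B_i}|\,\a_i^* \;+\; |f|\sum_{i=k}^\ell |\lambda_i|\,\a_i^*,
\]
handles the first sum via the $L^1$ estimate $\|(f-f_{B_i})\a_i^*\|_{L^1}\le C\|f\|_{\Lambda_{1/p-1}}$ (the $(p,q)$-atom analogue of Lemma~\ref{controle}), and reduces the second sum to a \emph{pointwise multiplier lemma}: for every ball $B$ of radius~$1$,
\[
\int_B |f\psi|^p\,d\tau \;\le\; C\,\|f\|_{\Lambda^+_{1/p-1}}^p \int_B |\psi|^p\,d\mu,
\]
proved by observing that $|f(x)-f(x_0)|^p\big(2K_0^2+K_0 d(x_0,x)\big)^{-\n(1-p)}\lsim \|f\|_{\Lambda_{1/p-1}}^p$ at every point $x$, and then covering $\X$ by almost disjoint unit balls. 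Applied with $\psi=\sum|\lambda_i|\a_i^*\in L^p(\mu)$, this gives the $L^p(\tau)$ control directly.

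Your approach tries to realise the same cancellation \emph{after} separating the constant $|f_{B_i}|$ from the integral $\|\a_i^*\|_{L^p(\tau)}$. This is where the obstacle you flag becomes a genuine gap. When $B_i$ has large radius $R_i$ and contains $x_0$, your bound gives $|f_{B_i}|^p\lsim (1+R_i)^{\n(1-p)}$, but the weight $(2K_0^2+K_0 d(x_0,\cdot))^{-\n(1-p)}$ is \emph{not} uniformly small over $2K_0 B_i$: near $x_0$ it is $\sim 1$, and one computes $\mu(B_i)^{-p}\,\tau(2K_0 B_i)\lsim R_i^{\,p(\n-\kappa)}$ from (\ref{norminf})--(\ref{normsup}), which is unbounded whenever $\kappa<\n$. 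The cancellation you want is pointwise---$|f(x)|$ grows like $(1+d(x_0,x))^{\n(1/p-1)}$ exactly where the weight decays---and is lost once $|f_{B_i}|$ is pulled out as a constant. The paper's multiplier lemma is precisely the device that captures this pointwise cancellation; you should use it (or an equivalent pointwise bound) rather than the atom-by-atom product estimate.
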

\begin{proof}
Let $f\in\Lambda_{\frac{1}{p}-1}(\X)$ and $g\in\H^{p})$. We assume that $g$ has the following atomic decomposition
\begin{equation}
g=\sum^{\infty}_{i=1}\lambda_{i}\mathfrak a_{i},
\end{equation}
where $\mathfrak a_{i}'s$ are atoms supported respectively in the balls $B_{i}$. All we have to prove is that the series 
\begin{equation}
\sum^{\infty}_{i=0}\lambda_{i}(f-f_{B_{i}})\mathfrak a_{i}\label{bonne}
\end{equation}
and
\begin{equation}
\sum^{\infty}_{i=0}\lambda_{i}f_{B_{i}}\mathfrak a_{i}\label{mauvaise}
\end{equation}
converge respectively in $L^{1}(\X)$ and in $\H^{p}(\X,d,\tau).$
Arguing as in the previous theorem, we have that series (\ref{bonne}) converges normally in $L^{1}(\X)$. It remain to prove that (\ref{mauvaise}) converge in $\H^{p}(\X,d,\tau)$. As in Theorem \ref{main}, we have 
\begin{equation}
\left(S^{\ell}_{k}\right)^{\ast} \leq \sum^{\ell}_{i=k}\left|\lambda_{i}\right|\left(\a_{i}f_{B_{i}}\right)^{\ast}
\leq\sum^{\ell}_{i=k}\left|\lambda_{i}\right|\left|f-f_{B_{i}}\right|(\a_{i})^{\ast}+\left(\sum^{\ell}_{i=k}\left|\lambda_{i}\right|(\a_{i})^{\ast}\right)\left|f\right|,\label{controle1}
\end{equation} 
where $S^{\ell}_{k}=\sum^{\ell}_{i=k}\lambda_{i}\a_{i}\mathfrak b_{B_{i}}\text{ for }k<\ell.$ We claim that Lemma \ref{controle} remain true if we replace the space $\BMO(\X)$ by $\Lambda_{\frac{1}{p}-1}(\X)$ and the $(1,q)$-atoms by $(p,q)$-atoms $q\geq 1$, i.e. for $f\in\Lambda_{\frac{1}{p}-1}(\X)$ and $\mathfrak a$ a $(p,q)$-atom supported in the ball $B$,
\begin{equation}
\left\|(f-f_{B})\mathfrak a^{\ast}\right\|_{L^{1}}\leq C\left\|f\right\|_{\Lambda_{\frac{1}{p}-1}}.
\end{equation}
In fact, by the definition of Lipschitz space $\Lambda_{\frac{1}{p}-1}(\X)$, we have
\begin{equation}
\int_{B_{(x_{0},2 K_{0}R)}}\left|f(z)-f_{B}\right|\a^{\ast}(z)d\mu(z)\leq C\left\|f\right\|_{\Lambda_{\frac{1}{p}-1}(\X)}.
\end{equation}
In other respect 
\begin{equation}
\a^{\ast}(z)\leq C\mu\left(B(x_{0},R)\right)^{1-\frac{1}{p}}\left(\frac{R}{d(z,x_{0})}\right)^{\beta}\frac{1}{\mu(B(z,d(z,x_{0})))},
\end{equation}
for all $z\notin B(x_{0},2K_{0} R)$ according to Lemma 4.4 of \cite{GLY}.

 Arguing as in the proof of Lemma \ref{controle}, we have that $\sum\left|\lambda_{i}\right|\left|f-f_{B_{i}}\right|(\a_{i})^{\ast}$  converges in $L^{1}(\X)$. The proof of the Theorem will be complete if we establish that for any ball $B$ of radius $1$, we have for $f\in\Lambda_{\frac{1}{p}-1}(\X)$ and  $\psi\in L^{p}(B)$
\begin{equation}
\int_{B}(\left|f(x)\psi(x)\right|)^{p}d\tau(x)\leq C\left\|f\right\|^{p}_{\Lambda^{+}_{\frac{1}{p}-1}(\X)}\int_{B}\left|\psi(x)\right|^{p}d\mu(x),
\end{equation}
where $\left\|f\right\|^{p}_{\Lambda^{+}_{\frac{1}{p}-1}(\X)}=\left\|f\right\|^{p}_{\Lambda_{\frac{1}{p}-1}(\X)}+\max(\left|f(x_{0})\right|,1)^{p}$. 
Following the method in \cite{BF}, we have
 \begin{eqnarray*} \int_{B}\frac{\left|f(x)\psi(x)\right|^{p}}{(2K^{2}_{0}+K_{0}d(x_{0},x))^{\n(1-p)}}d\mu(x)&\leq& \int_{B\cap\left\{\left|f\right|\leq1\right\}}\left|\psi(x)\right|^{p} d\mu(x)\\ &+&\int_{B\cap\left\{\left|f\right|>1\right\}}\left|f(x)\right|^{p}\frac{\left|\psi(x)\right|^{p}}{(2K^{2}_{0}+K_{0}d(x_{0},x))^{\n(1-p)}}d\mu(x).
 \end{eqnarray*}
 Furthermore,
 \begin{equation}
 \begin{aligned} &\int_{B\cap\left\{\left|f\right|>1\right\}}\left|f(x)\right|^{p}\frac{\left|\psi(x)\right|^{p}}{(2K^{2}_{0}+K_{0}d(x_{0},x))^{\n(1-p)}}d\mu(x)\\ 
 &\ \ \ \ \ \ \ \ \ \ \ \ \ \ \ \ \ \ \ \ \leq\int_{B\cap\left\{\left|f\right|>1\right\}}\left|f(x)-f(x_{0}\right|^{p}\frac{\left|\psi(x)\right|^{p}}{(2K^{2}_{0}+K_{0}d(x_{0},x))^{\n(1-p)}}d\mu(x)\\ 
 &\ \ \ \ \ \ \ \ \ \ \ \ \ \ \ \ \ \ \ \ +\left|f(x_{0})\right|^{p}\int_{B\cap\left\{\left|f\right|>1\right\}}\frac{\left|\psi(x)\right|^{p}}{(2K^{2}_{0}+K_{0}d(x_{0},x))^{\n(1-p)}}d\mu(x).
 \end{aligned}
 \end{equation}
 Since $B\subset B(x_{0},2K^{2}_{0}+K_{0}d(x,x_{0}))$ for all $x$ in the ball $B$ of radius $1$, it comes from the definition of Lipschitz space $\Lambda_{\frac{1}{p}-1}(\X)$ that the first term in te right hand side of the above inequality is less or equal to
 \begin{equation*}
\left\|f\right\|^{p}_{\Lambda_{\frac{1}{p}-1}(\X)}\int_{B}\frac{\mu(B(x_{0},2K^{2}_{0}+K_{0}d(x_{0},x)))^{1-p}}{(2K^{2}_{0}+K_{0}d(x_{0},x))^{\n(1-p)}}\left|\psi(x)\right|^{p}d\mu(x).
 \end{equation*}
 But, from (\ref{C1}) and (\ref{normsup}) we have that $\mu(B(x_{0},2K^{2}_{0}+K_{0}d(x_{0},x)))\lsim\left(2K^{2}_{0}+K_{0}d(x_{0},x))\right)^{\n}$. 

Thus
\begin{equation}
\int_{B\cap\left\{\left|f\right|>1\right\}}\left|f(x)\right|^{p}\frac{\left|\psi(x)\right|^{p}}{(2K^{2}_{0}+K_{0}d(x_{0},x))^{\n(1-p)}}d\mu\lsim\left(\left\|f\right\|^{p}_{\Lambda_{\frac{1}{p}-1}(\X)}+\left|f(x_{0})\right|^{p}\right)\int_{B}\left|\psi(x)\right|d\mu(x).
\end{equation}
The result follow by covering the hold space by almost disjoint balls of radius $1$.
\end{proof}

\begin{remark}
Let $\frac{\n}{\n+\epsilon} <p<1$ and  $\gamma:=\frac 1p-1$. Then, for $h\in\mathcal H^p(\X)$ and $f\in\Lambda_{\gamma}(\X)\cap L^{\infty}(\X)$,
the product $h\times f$ can be given a meaning in the sense of
distributions. Moreover, we have the inclusion
\begin{equation}\label{inclusion}
  h\times f\in L^1(\X)+ \mathcal H^p(\X).
  \end{equation}
\end{remark} 

\begin{proof} Let $\mathfrak h\in\mathcal H^p(\X)$ be as in (\ref{atomdecompo}), where the atoms involved are $(p,\infty)$-atoms, and $f\in\Lambda_{\gamma}(\X)$. From Theorem \ref{lipschitzhom}, we have that
 \begin{equation}
\sum^{\infty}_{i=1}\lambda_{i}\left(f-f_{B_{i}}\right)\a_{i} 
 \end{equation}
 converge in $L^{1}(\X)$. For the series $\sum^{\infty}_{i=1}\lambda_{i}f_{B_{i}}\a_{i}$, we just have to remark that the functions $\frac{1}{\left\|f\right\|_{L^{\infty}(\X)}}f_{B_{j}}\a_{i}$ are $(p,\infty)$-atoms. In fact,
 	
\begin{enumerate}
	\item [(i)] supp$f_{B}\a_{i}\subset B_{i}$, since supp$\a_{i}\subset B_{i}$
	\item[(ii)] $\int_{\X} f_{B_{i}}\a_{i}(x)dx=0$
	\item[(iii)] $\left|f_{B_{i}}\a_{i}(x)\right|\leq  \left\|f\right\|_{L^{\infty}(\X)}\mu(B_{i})^{-\frac{1}{p}}$ 
\end{enumerate}
 and this end the proof, since $\sum^{\infty}_{i=1}\left|\lambda_{i}\right|^{p}<\infty$
\end{proof}
\begin{remark}
In the case $\mu(X)<\infty$, all our results remain valid, provided we consider the constant function $\mu(\X)^{-\frac{1}{p}}$ as an atom, and put 
\begin{equation}
\left\|\mathfrak b\right\|_{\BMO(\X)}=\sup_{B:ball}\frac{1}{\mu(B)}\int_{B}\left|\mathfrak b(x)-\mathfrak b_{B}\right|d\mu(x)+\left\|\mathfrak b\right\|_{L^{1}(\X)}
\end{equation}
and
\begin{equation}
\left\|f\right\|_{\Lambda_{\gamma}(\X)}=\sup\left\{\frac{\left|f(x)-f(y)\right|}{\mu(B)},\text{ for all ball } B\ni x,y\right\}+\left|\int_{\X}f(x)d\mu(x)\right|.
\end{equation}
In this case the reverse doubling condition (\ref{reversedoubling}), need to be satisfied just for small balls.
\end{remark}

\end{document}